\renewcommand{\Re}{\mathop{\rm Re}\nolimits}
\renewcommand{\Im}{\mathop{\rm Im}\nolimits}
\theoremstyle{plain}
\newtheorem{theorem}{Theorem}[section]
\newtheorem{lemma}[theorem]{Lemma}
\newtheorem{proposition}[theorem]{Proposition}
\theoremstyle{definition}
\theoremstyle{remark}
\newtheorem{remark}[theorem]{Remark}
\newtheorem{claim}[theorem]{Claim}
\newcommand{\R}{{\mathbb R}}
\newcommand{\Z}{{\mathbb Z}}
\newcommand{\N}{{\mathbb N}}
\def\im{{\rm i}}
\newcommand{\C}{\mathbb{C}}
\def\({\left(}
\def\){\right)}
\def\<{\left\langle}
\def\>{\right\rangle}
\numberwithin{equation}{section}
\begin{document}

\title{Asymptotic stability of soliton for discrete nonlinear Schrödinger equation on one-dimensional lattice% near anti-continuous limit
}

\author{Masaya Maeda and Masafumi Yoneda}
\maketitle

\begin{abstract}
In this paper we give a simple and short proof of asymptotic stability of soliton for discrete nonlinear Schrödinger equation near anti-continuous limit.
Our novel insight is that the analysis of linearized operator, usually non-symmetric, can be reduced to a study of simple self-adjoint operator almost like the free discrete Laplacian restricted on odd functions.
%aあ
\end{abstract}

\section{Introduction}

In this paper, we study the discrete nonlinear Schr\"odinger equation (DNLS) on $\Z$:
\begin{align}\label{dnls}
\im \partial_t u = -\Delta_{\mathrm{d}}u - |u|^6 u,\ u:\R\times \Z\to \C,
\end{align}
where $\Delta_{\mathrm{d}}$ is the discrete Laplacian given by $\Delta_{\mathrm{d}}f(x)=f(x+1)-2f(x)+f(x-1)$. 
\begin{remark}
We have chose to work on the specific nonlinearity $-|u|^6u$.
However, it will be clear that the proof and the result of this paper also holds for general nonlinearity $g(|u|^2)u$ with smooth $g$ satisfying $g(0)=g'(0)=g''(0)=0$.
\end{remark}

DNLS \eqref{dnls} appear in many models in physics such as Bose-Einstein condensation in optical lattice \cite{Cataliotti01Science} and photonic lattice \cite{Efremidis02PRE}.
The aim of this paper is to study the stability property of bound states (soliton) solutions $e^{\im \omega t}\phi_\omega$. % with large $\omega>0$.
To state our result precisely, we set,
\begin{align*}
\|u\|_{l^p_a}:=\|e^{a|\cdot|}u\|_{l^p},\quad
(u,v):=\sum_{x\in \Z}u(x)\overline{v(x)}\quad \text{and}\quad  \<u,v\>:=\Re(u,v).
\end{align*}

We will consider large and concentrated solitons given by the following Proposition:
\begin{proposition}\label{prop:nbs}
There exist $\omega_0>0$ and $C>0$ s.t.\ there exist $\phi_{\cdot} \in C^\omega((\omega_0,\infty),l^2_{10})$ s.t. 
\begin{align}\label{sp}
0=-\Delta_{\mathrm{d}} \phi_\omega +\omega \phi_\omega - |\phi_\omega|^6 \phi_\omega,
\end{align}
and
\begin{align}
\sum_{j=0}^2\omega^j 
\| \partial_\omega^j\phi_\omega - \partial_\omega^j\(\omega^{\frac{1}{6}}\delta_0\) \|_{l^{2}_{10}}&\leq C\omega^{-\frac{5}{6}},\label{sp:est1}\\
\sum_{j=0}^{2}\omega^j\|P_0^\perp \partial_{\omega}^j \phi_{\omega}\|_{l^2_{10}}&\leq C \omega^{-\frac{5}{6}},\label{sp:est2}
\end{align}
where $P_0^\perp = 1 - (\cdot,\delta_0)\delta_0$ and $\delta_0(x)=1$ if $x=0$ and $\delta_0(x)=0$ if $x\neq 0$.
\end{proposition}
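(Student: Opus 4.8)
The plan is to remove the $\omega$-dependence of the amplitude by rescaling and then to solve the resulting fixed-point problem with the analytic implicit function theorem. Writing $\phi_\omega=\omega^{1/6}\psi$ and $\epsilon=\omega^{-1}$, equation \eqref{sp} becomes
\begin{align*}
F(\psi,\epsilon):=-\epsilon\Delta_{\mathrm{d}}\psi+\psi-|\psi|^6\psi=0,
\end{align*}
and the anti-continuous profile $\omega^{1/6}\delta_0$ corresponds to $\psi=\delta_0$, which indeed satisfies $F(\delta_0,0)=\delta_0-|\delta_0|^6\delta_0=0$ since $\delta_0^6=\delta_0$ as a sequence. I would look for a \emph{real-valued} solution: this both fixes the $U(1)$ gauge (so the linearized operator is genuinely invertible rather than merely Fredholm, the phase mode $\im\delta_0$ being otherwise in the kernel) and reduces the nonlinearity $N(\psi):=|\psi|^6\psi$ to the polynomial $\psi\mapsto\psi^7$.

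The functional-analytic setting is the real Banach space $X$ of real-valued sequences with norm $\|\cdot\|_{l^2_{10}}$. Two elementary facts make $F$ a well-behaved real-analytic map $X\times\R\to X$. First, $l^2_{10}$ is a Banach algebra: writing $\tilde u=e^{10|\cdot|}u$ one has $\|uv\|_{l^2_{10}}=\|e^{-10|\cdot|}\tilde u\tilde v\|_{l^2}\le\|\tilde u\tilde v\|_{l^2}\le\|\tilde u\|_{l^2}\|\tilde v\|_{l^\infty}\le\|u\|_{l^2_{10}}\|v\|_{l^2_{10}}$, using the pointwise-multiplication algebra property of $l^2(\Z)$. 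Second, the shifts, hence $\Delta_{\mathrm{d}}$, are bounded on $l^2_{10}$ because $e^{10|x|}\le e^{10}e^{10|x\pm1|}$, which gives $\|\Delta_{\mathrm{d}}\|_{X\to X}\lesssim e^{10}$. Consequently $F$, being polynomial in $(\psi,\epsilon)$ with coefficients bounded on $X$, is jointly real-analytic.

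Next I would compute the linearization at the anti-continuous limit. Since $DN(\delta_0)h=7\delta_0^6 h$ and $\delta_0^6=\delta_0$ acts as the multiplier $h\mapsto h(0)\delta_0$, the operator $D_\psi F(\delta_0,0)$ is diagonal, equal to $-6$ at $x=0$ and to $1$ at $x\neq0$, hence boundedly invertible on $X$ with inverse norm $1$. The analytic implicit function theorem then produces $\epsilon_0>0$ and a real-analytic curve $\epsilon\mapsto\psi(\epsilon)\in X$ with $\psi(0)=\delta_0$ and $F(\psi(\epsilon),\epsilon)=0$. Here lies the main (though still soft) obstacle: because $\|\epsilon\Delta_{\mathrm{d}}\|_{X\to X}\lesssim\epsilon e^{10}$, the perturbation is small \emph{in the weighted space} only when $\epsilon e^{10}\ll1$, and this is precisely what forces $\omega_0$ to be large (of order $e^{10}$). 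This is consistent with the fact that the true profile decays like $\epsilon^{|x|}$ and leaves $l^2_{10}$ once $\epsilon\gtrsim e^{-10}$. Differentiating $F(\psi(\epsilon),\epsilon)=0$ at $\epsilon=0$ gives $\psi(\epsilon)=\delta_0+\epsilon\,[D_\psi F(\delta_0,0)]^{-1}\Delta_{\mathrm{d}}\delta_0+O(\epsilon^2)$, so $\|\psi(\epsilon)-\delta_0\|_X\le C\epsilon$, and by analyticity $\|\partial_\epsilon^k\psi(\epsilon)\|_X\le C_k$ uniformly on $[0,\epsilon_0/2]$.

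Finally I would transfer these estimates back. The map $\omega\mapsto\phi_\omega:=\omega^{1/6}\psi(\omega^{-1})$ is real-analytic from $(\omega_0,\infty)$ into $l^2_{10}$, which gives the asserted $C^\omega$ regularity. Writing $\phi_\omega-\omega^{1/6}\delta_0=\omega^{1/6}(\psi(\omega^{-1})-\delta_0)$ and applying Leibniz together with the chain rule, each $\omega$-derivative either falls on the prefactor $\omega^{1/6}$ (costing a power $\omega^{-1}$) or on $\psi(\omega^{-1})$ (producing a factor $\sim\omega^{-2}$ times a bounded $\partial_\epsilon^k\psi$), so every resulting term is $O(\omega^{-5/6-j})$; hence $\partial_\omega^j(\phi_\omega-\omega^{1/6}\delta_0)=O(\omega^{-5/6-j})$ in $l^2_{10}$, and multiplying by the weight $\omega^j$ yields \eqref{sp:est1}. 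Estimate \eqref{sp:est2} is then immediate: since $P_0^\perp$ is bounded on $l^2_{10}$ and $P_0^\perp\delta_0=0$, one has $P_0^\perp\partial_\omega^j\phi_\omega=P_0^\perp\partial_\omega^j(\phi_\omega-\omega^{1/6}\delta_0)$, to which \eqref{sp:est1} applies directly.
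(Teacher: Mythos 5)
Your argument is correct and follows the same overall strategy as the paper's appendix: rescale by $\omega^{1/6}$, pass to $\epsilon=\omega^{-1}$, and apply the analytic implicit function theorem at the anti-continuous limit $(\delta_0,0)$, where the linearization is diagonal with entries $-6$ at the origin and $1$ elsewhere. The difference lies in the decomposition. The paper does not work directly in $l^2_{10}$; it writes $\varphi=\omega^{-1}\psi_0\delta_0+\sum_{j\geq1}\omega^{-j}\psi_j(\delta_j+\delta_{-j})$, factoring out the expected decay $\omega^{-j}$ at site $j$ and the even symmetry, and runs the implicit function theorem on the coefficient sequence $\boldsymbol{\psi}\in l^\infty(\N_0,\R)$. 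That version buys the sharp two-sided asymptotics $|\phi_\omega(x)|\sim\omega^{\frac16-|x|}$ recorded in the remark after Lemma \ref{lem:anbs}, with an $\epsilon$-radius independent of the weight (membership in $l^2_{10}$ then being a separate, trivial consequence for $\omega\gtrsim e^{10}$). Your version, running the fixed point directly in the Banach algebra $l^2_{10}$, is slightly more economical and — as you correctly observe — pays for it by tying the smallness threshold to the weight through $\|\epsilon\Delta_{\mathrm{d}}\|_{l^2_{10}\to l^2_{10}}\lesssim\epsilon e^{10}$; it yields only the weighted-norm bounds \eqref{sp:est1}--\eqref{sp:est2} and not the site-by-site lower bounds. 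Since the rest of the paper only ever invokes the $l^2_{10}$ estimates of Proposition \ref{prop:nbs} (the nonvanishing of $\phi_\omega(0)$ and $\partial_\omega\phi_\omega(0)$ needed in Lemma \ref{lem:Q} already follows from \eqref{sp:est1}), your weaker conclusion suffices, and your derivative bookkeeping ($\partial_\omega^k(\psi(\omega^{-1})-\delta_0)=O(\omega^{-1-k})$ combined with Leibniz on the prefactor $\omega^{1/6}$) correctly produces the uniform factor $\omega^{-5/6-j}$.
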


The main result of this paper is the asymptotic stability result for the soliton $e^{\im \omega t }\phi_{\omega}$ given in Proposition \ref{prop:nbs} (notice that if $\phi_{\omega}$ satisfies \eqref{sp}, then $e^{\im \omega t}\phi_{\omega}$ is a solution of \eqref{dnls}) for $\omega$ sufficiently large.
In particular, we prove the following:

\begin{theorem}\label{thm:main}
There exists $\omega_1\geq \omega_0$, where $\omega_0$ is given in Proposition \ref{prop:nbs}, s.t.\ for $\omega_*>\omega_1$, there exist $\delta_0>0$ and $C>0$ s.t.\ if $\epsilon:=\|u-\phi_{\omega_*}\|_{l^2}<\delta_0$, then there exist $\theta,\omega\in C^\infty([0,\infty),\R)$, $\omega_+>\omega_1$ and $\xi_+\in l^2$ s.t.
\begin{align}
&\lim_{t\to \infty} \| u(t)-e^{\im \theta(t)}\phi_{\omega(t)}-e^{\im t \Delta_{\mathrm{d}}}\xi_+\|_{l^2}=0,\label{eq:main1}\\
&\lim_{t\to \infty}\omega(t)=\omega_+ \ \text{and} \label{eq:main2}\ 
|\log \omega_*- \log \omega_+|+\|\xi_+\|_{l^2}\leq C\epsilon.%\label{eq:main3}
\end{align}
\end{theorem}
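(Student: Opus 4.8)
The plan is to run the now-standard modulation-plus-dispersion scheme, but to organize the linear analysis around the even/odd splitting, which is where the concentration estimates of Proposition~\ref{prop:nbs} pay off. First I would write the modulated ansatz $u(t)=e^{\im\theta(t)}\(\phi_{\omega(t)}+\eta(t)\)$ and fix $\theta(t),\omega(t)$ by the orthogonality conditions $\<\eta,\im\phi_{\omega}\>=\<\eta,\partial_\omega\phi_{\omega}\>=0$. Because there is no continuous translation symmetry on $\Z$, the generalized kernel of the linearization is only two-dimensional, spanned by the phase mode $\im\phi_\omega$ and the scaling mode $\partial_\omega\phi_\omega$; crucially both are \emph{even} in $x$. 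Solving the constraints by the implicit function theorem (valid while $\eta$ is small) yields modulation equations in which $\dot\omega$ and $\dot\theta-\omega$ are quadratic in the localized norm $\|\eta\|_{l^2_{-a}}$, the localization coming from the exponentially concentrated test functions supplied by Proposition~\ref{prop:nbs}.

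Substituting into \eqref{dnls} produces an equation for $\eta=\eta_1+\im\eta_2$ whose linear part is the non-self-adjoint matrix operator
\[
\mathcal{L}_\omega=\begin{pmatrix}0 & L_-\\ -L_+ & 0\end{pmatrix},\qquad L_+=-\Delta_{\mathrm{d}}+\omega-7|\phi_\omega|^6,\quad L_-=-\Delta_{\mathrm{d}}+\omega-|\phi_\omega|^6,
\]
together with the modulation sources and the genuinely nonlinear terms. The key reduction is the following. By Proposition~\ref{prop:nbs} the potentials $|\phi_\omega|^6$ are of size $O(\omega)$ but supported essentially at $x=0$, so $L_\pm$ preserve the even/odd decomposition and, on odd sequences (which vanish at $0$), reduce to the free operator $-\Delta_{\mathrm{d}}+\omega$. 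Moreover the two threshold resonances of $-\Delta_{\mathrm{d}}$ at the band edges, namely the constant sequence and $(-1)^x$, are both even, so the odd sector carries \emph{no} threshold obstruction. Since the generalized kernel and the self-interaction of the soliton live in the even sector, after stripping off the two modulation directions I would reduce the remaining dynamics to a self-adjoint operator that, by the evenness of both the potential and the resonances, behaves like $-\Delta_{\mathrm{d}}$ restricted to odd sequences. This replaces the usual delicate spectral study of $\mathcal{L}_\omega$ (absence of embedded eigenvalues and of threshold resonances) by explicit facts about the free discrete Laplacian, uniformly for large $\omega$.

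With this self-adjoint, essentially free model in hand, I would prove a Kato-type smoothing/virial bound $\int_0^\infty\|\eta(t)\|_{l^2_{-a}}^2\,dt\lesssim\epsilon^2$ for the linearized flow, using the limiting absorption principle for $-\Delta_{\mathrm{d}}$ on odd functions (where the thresholds are now regular) together with the classical $l^1\to l^\infty$ dispersive decay $\sim|t|^{-1/3}$. The septic power $|u|^6u$ leaves a wide margin, so the modulation and nonlinear terms are higher order and the estimate closes by a bootstrap. Feeding the spacetime bound back into the modulation equations gives the integrability of $\dot\omega$, hence $\omega(t)\to\omega_+$ and the bound on $|\log\omega_*-\log\omega_+|$ in \eqref{eq:main2}; the asymptotic profile $\xi_+$ and the convergence \eqref{eq:main1} then follow from a standard wave-operator argument comparing $\eta$ with the free evolution $e^{\im t\Delta_{\mathrm{d}}}$, the local decay ensuring that the radiation asymptotically detaches from the concentrated soliton.

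The step I expect to be the main obstacle is the coercivity of the virial/smoothing quadratic form together with the exclusion of any threshold or internal-mode obstruction, uniformly as $\omega\to\infty$. For a non-self-adjoint linearization this is precisely where a Fermi-Golden-Rule computation is normally required and where most of the difficulty resides. Here, however, the odd-sector reduction is the device that trivializes it: once the even resonances and the modulation modes are removed, the relevant form is that of $-\Delta_{\mathrm{d}}$ on odd sequences, whose positivity and regular threshold behavior are transparent and $\omega$-independent. Verifying that the perturbation coming from the true (not exactly $\delta_0$) potential $|\phi_\omega|^6$ does not destroy this structure, which is controlled by the $O(\omega^{-5/6})$ remainders of Proposition~\ref{prop:nbs}, is the remaining technical point.
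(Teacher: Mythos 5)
Your overall scheme (modulation, Strichartz plus Kato smoothing, bootstrap, wave-operator argument at the end) matches the paper's, and your identification of the decisive fact --- that the band-edge resonances of $-\Delta_{\mathrm{d}}$, the constant sequence and $(-1)^x$, are both even --- is exactly the fact the paper exploits. However, the step by which you propose to exploit it has a genuine gap. You reduce the linearized dynamics by the parity decomposition of the radiation in $x$, observe that the odd sector is resonance-free, and then assert that ``after stripping off the two modulation directions'' the remaining dynamics behaves like $-\Delta_{\mathrm{d}}$ on odd sequences. This does not follow: the orthogonality conditions remove only a two-real-dimensional subspace, while the even component of the radiation is infinite-dimensional, and it is precisely the even sector of the free evolution $e^{\im t\Delta_{\mathrm{d}}}$ that carries both threshold resonances. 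An even perturbation orthogonal to $\im\phi_\omega$ and $\partial_\omega\phi_\omega$ still fails the uniform resolvent bound $\|(-\Delta_{\mathrm{d}}-\lambda)^{-1}\|_{l^2_1\to l^2_{-1}}\lesssim 1$ near the band edges, so your smoothing/virial estimate would not close on that component. You are implicitly conflating ``odd'' with ``vanishing at the origin'': the orthogonality conditions (to leading order in $\omega^{-1}$) force $\xi(0)=0$, not oddness of $\xi$.

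The paper's mechanism is different and handles all of the radiation. Setting $\eta=P_0^\perp\xi$, the leading linear operator in \eqref{eq:eta} is not $\Delta_{\mathrm{d}}$ but the Dirichlet operator $\Delta_0=P_0^\perp\Delta_{\mathrm{d}}P_0^\perp$ on $\{u:u(0)=0\}$ (the potential, concentrated at $x=0$ by Proposition \ref{prop:nbs}, is invisible there, and the discrepancy $1-Q[\theta,\omega]$ is $O(\omega^{-1})$ and absorbed perturbatively). Since $\Delta_0$ commutes with the restrictions $P_\pm$ to $\pm\N$, it decouples into two half-line Dirichlet Laplacians, and \emph{each} half-line piece --- regardless of the parity of $\eta$ as a function on $\Z$ --- is conjugated to $\Delta_{\mathrm{d}}$ acting on odd functions of $\Z$ via the odd reflection $Tu(x)=u(x)$, $Tu(-x)=-u(x)$, $Tu(0)=0$. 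Oddness thus enters only through this reflection of each half-line, which is why the resonance-free estimates of Proposition \ref{prop:stz} apply to the whole of $P_0^\perp l^2$ and not merely to its odd subspace. If you replace your parity reduction by this Dirichlet-plus-reflection reduction, the rest of your argument goes through essentially as in the paper.
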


%\newpage
Considering large soliton concentrated on a finite set is equivalent to studying the bound states near the so-called ``anti-continuous limit".
Indeed, setting $u(t,x)=\omega^{\frac{1}{6}}v(\omega t,x)$, $v$ satisfies
\begin{align}\label{dNLSrescale}
\im \partial_t v = - \epsilon \Delta_{\mathrm{d}} v - |v|^6v,
\end{align}
where $\epsilon = \omega^{-1}$.
By such rescaling, bound states given in Proposition \ref{prop:nbs} is rescaled to $e^{\im t} \psi_{\epsilon}(x)$ with $\psi_{\epsilon}\sim \delta_0$.
In the anti-continuous limit $\epsilon\to 0$,  \eqref{dNLSrescale} reduces to an infinite system of unrelated ordinarily differential equations and in particular posses a solution $e^{\im t}\delta_0$.
The first rigorous treatment for the existence of solitons (also called discrete breathers in the context of discrete nonlinear Klein-Gordon equations) branching from the above solution, was given by MacKay and Aubry \cite{MA94N} followed by \cite{Aubry97PD,JA97N}.

The asymptotic stability problem of soliton near the anti-continuous limit is no less important than the problem of existence, but it has not yet been thoroughly investigated.
The only asymptotic stability result we are aware is by Bambusi \cite{Bambusi13CMPdb} who studies the asymptotic stability of nonlinear discrete Klein-Gordon equations (for  linear/spectral stability see  \cite{PKF05PD,PS11PD}). 
Our result, Theorem \ref{thm:main}, is a DNLS version of Bambusi's result, but with a much simple proof, as explained below.
In this paper, we have chose to work on the rescaled setting to reduce the parameters.
Indeed, even if consider \eqref{dNLSrescale} we still need to consider a family of solitons $\psi_{\epsilon,\omega}$ with $\omega$ near $1$.
For asymptotic stability of bound states bifurcating from linear potential, see \cite{KPS09SIAM, CT09SIMA, MP12DCDS, Maeda17SIMA,Maeda21AA}.

We now explain the outline of the proof of Theorem \ref{thm:main}, which is short and simple and in particular avoids normal form transformation used in \cite{Bambusi13CMPdb}.
We start from a standard strategy initiated by \cite{BP92StP} for the asymptotic stability of solitons of nonlinear Schr\"odinger equations (see also \cite{SW90CMP} for small solitons).
That is, we decompose the solution $u$ near $\{e^{\im \theta}\phi_{\omega_*}\ |\ \theta\in \R\}$ as $u=e^{\im \theta}\phi_{\omega}+\xi$, with $i \xi$ orthogonal to $\{\im e^{\im \theta}\phi_{\omega}, e^{\im \theta}\partial_{\omega}\phi_{\omega}\}$.
Then, the problem is to study the dynamics of $\theta,\omega$ and $\xi$.
Roughly, the equation of $\xi$, which we obtain by substituting the ansatz into DNLS \eqref{dnls}, will have the form
\begin{align*}
\im \partial_t \xi = H_{\theta,\omega}\xi + O(\xi^2),
\end{align*}
where
\begin{align}
H_{\theta,\omega}\xi := \(-\Delta_{\mathrm{d}}+\omega - 4\phi_{\omega}^6\)\xi -3e^{2\im \theta}\phi_{\omega}^6  \overline{\xi}.
\end{align}
The "linear operator" $H_{\theta,\omega}$ is not $\C$-linear due to the complex conjugate.
Thus, it is natural to study the corresponding matrix $\C$-linear operator
\begin{align*}
\mathcal{H}_{\theta,\omega}:=\begin{pmatrix}
-\Delta_{\mathrm{d}} + \omega - 4\phi_{\omega}^6 & -3e^{2\im \theta}\phi_{\omega}^6 \\
3e^{-2\im \theta}\phi_{\omega}^6 & \Delta_{\mathrm{d}} - \omega + 4\phi_{\omega}^6
\end{pmatrix}.
\end{align*}
However, in general it is hard to study the spectral properties of the operator which are needed for the proof of asymptotic stability and one is forced to assume, for example, the nonexistence of embedded eigenvalues, edge resonances and internal modes (and if one admits internal modes, then one needs to assume Fermi Golden Rule property), see e.g. \cite{CM21DCDS,CM2111.02681}.

For our problem, the first attempt is to use the fact $\phi_{\omega}=\omega^{\frac{1}{6}}\(\delta_0+O(\omega^{-1})\)$ where the remainder is small and decaying exponentially, as proved in Proposition \ref{prop:nbs}.
So, we replace $\phi_{\omega}$ by $\omega^{\frac{1}{6}}\delta_0$ and consider
\begin{align}
\widetilde{\mathcal{H}}_{\theta,\omega}:=\begin{pmatrix}
-\Delta_{\mathrm{d}} + \omega - 4\omega \delta_0 & -3e^{2\im \theta}\omega \delta_0 \\
3e^{-2\im \theta}\omega \delta_0 & \Delta_{\mathrm{d}} - \omega + 4\omega \delta_0
\end{pmatrix}.
\end{align}
It is possible to study the spectral properties and show decay estimates related to the operator $\widetilde{\mathcal{H}}_{\theta,\omega}$.
However, one can reduce this operator one step further.
Now, recall that we were assuming $\im \xi \in  \{\im e^{\im \theta}\phi_{\omega}, e^{\im \theta}\partial_\omega \phi_{\omega}\}^{\perp}$ (here, we are considering real inner product).
Applying the approximation $\phi_{\omega}\sim \omega^{\frac{1}{6}}\delta_0$ to the \textit{orthogonality condition}, it will reduced to $\im \xi \in \{\im e^{\im \theta}\omega \delta_0, \frac{1}{6}e^{\im \theta}\omega^{-\frac{5}{6}}\}^\perp =\{\delta_0,\im \delta_0\}^\perp$, which simply means $\xi(0)=0$.
For $\xi$ satisfying $\xi(0)=0$, we have
\begin{align}
\widetilde{\mathcal{H}}_{\theta,\omega}\begin{pmatrix}
\xi\\ \overline{\xi}
\end{pmatrix}
=
\begin{pmatrix}
-\Delta_{\mathrm{d}} + \omega & 0 \\ 0 & \Delta_d -\omega
\end{pmatrix}
\begin{pmatrix}
\xi\\ \overline{\xi}
\end{pmatrix}.
\end{align}
Thus, $\widetilde{\mathcal{H}}_{\theta,\omega}$ reduces to diagonal matrix and there will be no point considering matrix operator anymore.
Thus, the task is now to study $\Delta_0 = P_0^\perp \Delta_{\mathrm{d}} P_0^\perp$ where $P_0^\perp$ is the projection on to the space $\{\xi\ |\ \xi(0)=0\}$, see Proposition \ref{prop:nbs}.

The analysis of $\Delta_0$ further reduces to the analysis of $\Delta_{\mathrm{d}}$ restricted on odd functions (see proof of Proposition \ref{prop:stz}, we also note that Bambusi \cite{Bambusi13CMPdb} studies similar operator).
Thus, we will obtain Strichartz estimate for free and the Kato smoothing estimates, which do not hold for $\Delta_{\mathrm{d}}$, from a simple fact that the edge resonance of $\Delta_{\mathrm{d}}$ is even.
Thus, we obtain the linear estimates needed (the idea using Kato smoothing is due to \cite{CT09SIMA}).
The rest of the paper is more of less standard except tracing the $\omega$ dependence of the error terms carefully.

From the above explanation, it should be clear that our strategy, reducing the linear operator to a simple operator $\Delta_0$ can be applied to other equations, nonlinearity and more over higher dimensions, where linear estimates are not well studied due to the complicated phase structure \cite{KKV08JFA,CI21N}.

This paper is organized as follows:
In section \ref{sec:lin}, we prove the linear estimates for $\Delta_0$.
Section \ref{sec:mod} will be devoted to the modulation argument and in particular we derive the equation of $\xi$ (see \eqref{nls:modcoor} and \eqref{eq:eta}) and $\theta,\omega$ (see, \eqref{eq:disceq}).
In section \ref{sec:prmain}, we prove Theorem \ref{thm:main} by bootstrapping argument (Proposition \ref{prop:main}).
Finally, in the appendix, we give an elementary proof of Proposition \ref{prop:nbs} which gives us the precise exponential decay rate of $\phi_{\omega}$ easily.

\section{Linear estimates}\label{sec:lin}
Recall that in Proposition \ref{prop:nbs}, $P_0^\perp$ was given by
\begin{align}\label{def:pperp}
P_0^\perp u(x)=\begin{cases}
u(x) & x\neq 0,\\
0 & x=0.
\end{cases}
\end{align}
We set the restriction of $\Delta_{\mathrm{d}}$ to $P_0^\perp l^2(\Z)=\{u\in l^2(\Z)\ |\ u(0)=0\}$ by $$\Delta_0:=P_0^\perp \Delta_{\mathrm{d}} :P_0^\perp l^2(\Z)\to P_0^\perp l^2(\Z).
$$
In particular, for $u\in P_0^\perp l^2(\Z)$, we have
\begin{align}
\(\Delta_0u\)(x)=\begin{cases}
u(\pm 2)-2u(\pm 1) & x=\pm 1,\\ u(x+1)-2u(x)+u(x-1) & |x|\geq 2
\end{cases}.
\end{align}
%\begin{proposition}[Dispersive estimate]
%Let $u_0\in P_0^\perp l^1(\Z)$.
%Then, we have
%\begin{align}
%\| e^{\im t \Delta_0}u_0\|_{l^\infty}\lesssim \|u_0\|_{l^1}.
%\end{align}
%\end{proposition}
For an interval $I\subset\R$, we set
\begin{align}
\mathrm{Stz}(I):=L^\infty(I, l^2(\Z))\cap L^6(I,l^\infty(\Z)),\ \mathrm{Stz}^*:=L^1(I,l^2(\Z))+L^{\frac{6}{5}}(I,l^1(\Z)).
\end{align}

The linear estimates we use in this paper are the following:

\begin{proposition}[Strichartz and Kato smoothing estimates]\label{prop:stz}
Let $I\subset\R$ be an interval with $0\in I$.
Let $u_0:\Z\to\C$ and $f:\R\times \Z\to\C$.
Then, we have
\begin{align}
 \|e^{\im t \Delta_0}P^\perp_0 u_0\|_{\mathrm{Stz}(I)\cap L^2(I,l^{2}_{-1})}&\lesssim \|u_0\|_{l^2},\label{stz1}\\
\|\int_0^{\cdot} e^{\im (\cdot-s)\Delta_0}P_0^\perp f(s)\,ds\|_{\mathrm{Stz}(I)}&\lesssim \|f\|_{\mathrm{Stz}^*(I)+L^2(I,l^{2}_{1})},\label{stz2}\\
\|\int_0^{\cdot} e^{\im (\cdot-s)\Delta_0}P_0^\perp f(s)\,ds\|_{L^2(I,l^{2}_{-1})}&\lesssim \|f\|_{L^2(I,l^{2}_1)}.\label{kato}
\end{align}
\end{proposition}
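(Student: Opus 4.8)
The plan is to discard the matrix/non-self-adjoint structure (already reduced in the introduction) and to realize $\Delta_0$ as the free discrete Laplacian on \emph{odd} functions, where the two band-edge resonances disappear. Since $P_0^\perp l^2(\Z)$ consists of functions vanishing at $0$, the site $x=0$ decouples the two half-lattices: for $u(0)=0$ one has $(\Delta_0 u)(\pm1)=u(\pm2)-2u(\pm1)$, so $\Delta_0=\Delta_0^+\oplus\Delta_0^-$ acts independently on $l^2(\Z_{>0})$ and $l^2(\Z_{<0})$. Each summand is a discrete Dirichlet Laplacian on a half-line, and the odd extension $u\mapsto\tilde u$ ($\tilde u(-x)=-u(x)$, $\tilde u(0)=0$) is an isometry up to a factor $\sqrt2$ onto the odd subspace of $l^2(\Z)$ intertwining $\Delta_0^+$ with $\Delta_{\mathrm{d}}$ restricted to odd functions (equivalently, the method of images $(e^{\im t\Delta_0})(x,y)=(e^{\im t\Delta_{\mathrm{d}}})(x-y)-(e^{\im t\Delta_{\mathrm{d}}})(x+y)$). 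As the weights $l^2_{\pm1}$ and the norm $l^\infty$ are comparable under odd extension, it suffices to prove \eqref{stz1}--\eqref{kato} for $e^{\im t\Delta_{\mathrm{d}}}$ acting on odd data $u_0$, whose Fourier transform $\hat u_0(\theta)=\sum_x u_0(x)e^{-\im x\theta}$ is odd and hence vanishes at $\theta\in\{0,\pi\}$, precisely the points where the group velocity $m'(\theta)$ degenerates, $m(\theta):=2\cos\theta-2$ being the symbol of $\Delta_{\mathrm{d}}$.

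For the $\mathrm{Stz}(I)$ part of \eqref{stz1}, the $L^\infty(I,l^2)$ bound is immediate from unitarity, while the $L^6(I,l^\infty)$ bound follows from the dispersive decay $\|e^{\im t\Delta_{\mathrm{d}}}\|_{l^1\to l^\infty}\lesssim|t|^{-1/3}$ (stationary phase, the exponent $1/3$ coming from the degenerate point $\theta=\pi/2$ where $m''=0$, and transferred to $\Delta_0$ by the image formula) together with the $TT^*$/Hardy--Littlewood--Sobolev argument, for which $(q,r)=(6,\infty)$ is exactly the admissible pair associated with decay rate $1/3$. The inhomogeneous Strichartz for the $\mathrm{Stz}^*$ source in \eqref{stz2} then follows in the standard way by $TT^*$ and the Christ--Kiselev lemma, the time exponents being strictly ordered ($1<\infty$, $6/5<6$); the $L^1(I,l^2)\to L^\infty(I,l^2)$ piece is in fact trivial by Minkowski.

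The heart of the matter is the weighted smoothing, where oddness is essential. Fixing $x$ and using Plancherel in $t$ after inserting $\delta(m(\theta)-\tau)$, the two branches $\theta=\pm\theta_0(\tau)$ of $\{m=\tau\}$ combine, thanks to $\hat u_0(-\theta_0)=-\hat u_0(\theta_0)$, into
\[
\int_\R e^{-\im t\tau}\,u(t,x)\,dt=\frac{\im\,\hat u_0(\theta_0)\,\sin(x\theta_0)}{\sin\theta_0},\qquad \tau=m(\theta_0),
\]
so that, after the coarea change of variables $\tau=m(\theta)$ (Jacobian $|m'(\theta)|=2|\sin\theta|$),
\[
\int_\R\|e^{\im t\Delta_{\mathrm{d}}}u_0\|_{l^2_{-1}}^2\,dt\ \simeq\ \int_0^\pi\frac{W(\theta)}{\sin\theta}\,|\hat u_0(\theta)|^2\,d\theta,\qquad W(\theta):=\sum_{x\in\Z}e^{-2|x|}\sin^2(x\theta).
\]
Here the singular factor $1/\sin\theta$ at the edges $\theta\in\{0,\pi\}$ is compensated by $W$: summing the geometric series shows $W$ is smooth and vanishes to second order at $0$ and $\pi$ (because the $\sin(x\theta)$ produced by oddness does), so $W(\theta)\lesssim\sin^2\theta$ and $W(\theta)/\sin\theta\lesssim\sin\theta\le1$, giving the right-hand side $\lesssim\|\hat u_0\|_{L^2}^2\simeq\|u_0\|_{l^2}^2$; this is the $L^2(I,l^2_{-1})$ part of \eqref{stz1}, with an extra gain $\sin\theta$ (a half-derivative at the edges). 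For the two inhomogeneous weighted bounds I would invoke Kato's $H$-smoothness duality rather than recompute: the estimate just proved says $A=e^{-|\cdot|}$ is $\Delta_0$-smooth, and Kato's theorem yields $\|A\int_0^\cdot e^{\im(\cdot-s)\Delta_0}A\,g\,ds\|_{L^2_t l^2}\lesssim\|g\|_{L^2_t l^2}$ directly (both sides $L^2_t$, so no Christ--Kiselev), which is \eqref{kato} on setting $f=Ag$. For the $L^2(I,l^2_1)$ source in \eqref{stz2} I would factor $\int_\R e^{\im(t-s)\Delta_0}f\,ds=e^{\im t\Delta_0}\int_\R e^{-\im s\Delta_0}f\,ds$, bound the inner integral in $l^2$ by the dual smoothing estimate $\|\int_\R e^{-\im s\Delta_0}f\,ds\|_{l^2}\lesssim\|f\|_{L^2 l^2_1}$, and then apply homogeneous Strichartz; passing from $\int_\R$ to the retarded $\int_0^\cdot$ is Christ--Kiselev for the $L^6(I,l^\infty)$ component ($2<6$) and immediate for $L^\infty(I,l^2)$ by restriction.

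The main obstacle I anticipate is the edge-resonance compensation, namely establishing quantitatively that oddness removes the $\theta\in\{0,\pi\}$ singularities in the spectral density, which is encoded in the bound $W(\theta)\lesssim\sin^2\theta$; everything else is the standard assembly of dispersive, smoothing and duality estimates, the only genuine care being the bookkeeping of the mixed source space $\mathrm{Stz}^*+L^2(I,l^2_1)$ appearing in \eqref{stz2}.
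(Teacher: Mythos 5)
Your reduction coincides with the paper's: split $P_0^\perp l^2$ into the two half-lattices via $P_\pm$, conjugate by the odd extension so that $\Delta_0$ becomes $\Delta_{\mathrm{d}}$ restricted to odd functions, obtain the unweighted Strichartz estimates from the $|t|^{-1/3}$ dispersive decay (the paper simply cites Stefanov--Kevrekidis rather than rerunning $TT^*$), and derive the homogeneous weighted bound from the cancellation that oddness produces at the band edges $\theta\in\{0,\pi\}$. Your Plancherel/coarea computation of $\int_\R\|e^{\im t\Delta_{\mathrm{d}}}u_0\|_{l^2_{-1}}^2\,dt$ is a legitimate alternative to the paper's route, which instead verifies the uniform resolvent bound \eqref{katosuff} via the reflected Green's function \eqref{mirgreen}; both rest on the same cancellation $e^{-\im\mu|x-y|}-e^{-\im\mu|x+y|}$.

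The genuine gap is in your treatment of \eqref{kato}. You assert that Kato's theorem yields the \emph{retarded} double-weighted estimate $\|A\int_0^{\cdot}e^{\im(\cdot-s)\Delta_0}Ag\,ds\|_{L^2_tl^2}\lesssim\|g\|_{L^2_tl^2}$ ``directly (both sides $L^2_t$, so no Christ--Kiselev).'' This is backwards. $H$-smoothness of $A$ and its dual control the \emph{non-retarded} operator $g\mapsto A\int_\R e^{\im(t-s)\Delta_0}Ag(s)\,ds=TT^*g$ on $L^2_tl^2$; truncating the integral to $s\in(0,t)$ is exactly the step Christ--Kiselev would perform, and $q=\tilde q=2$ is the one exponent pair at which Christ--Kiselev \emph{fails}, so nothing comes for free. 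Equivalently, the retarded operator is the Fourier multiplier $AR(\tau+\im 0)A$ in time, whose uniform boundedness is strictly more information than the boundedness of its imaginary part $A\,\delta(-\Delta_0-\tau)A$, which is all your coarea computation controls: the real part is a Hilbert transform in $\tau$ of the spectral density and requires its own bound. This is precisely why the paper invokes Mizumachi's identity \eqref{eq:Mform}, which writes $2\int_0^t$ as a multiplier with symbol $R(\lambda-\im 0)+R(\lambda+\im 0)$ --- estimated by Plancherel together with the one-sided uniform resolvent bound \eqref{katosuff} --- plus two integrals over fixed half-lines that factor through $e^{-\im t\Delta_0}$ and are handled by \eqref{kato2} and its dual. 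To repair your argument, upgrade your edge-cancellation computation to the one-sided bound $\sup_{\Im\lambda\neq0}\|AR(\lambda)A\|_{l^2\to l^2}\lesssim 1$ (your Green's-function formula does give this) and then run such a decomposition of the retarded integral; the estimate is not a formal consequence of $H$-smoothness alone.
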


\begin{proof}
First, let $P_{\pm}u(x)=u(x)$ if $\pm x\geq 1$ and $P_{\pm}u(x)=0$ if $\pm x\leq 0$. 
Then, we have $[\Delta_0,P_{\pm}]=0$ so we get
\begin{equation*}
e^{it\Delta_0}(P_++P_-)=(P_++P_-)e^{it\Delta_0}.
\end{equation*}
 It implies $\Delta_0=\Delta_+\oplus\Delta_-$ where $\Delta_{\pm}:=P_{\pm }\Delta_0:P_{\pm}l^2\to P_{\pm }l^2$.
Thus, it suffices to show each estimate for $\Delta_{\pm}$ and we will only consider $\Delta_+$.
Further, for  $u:\N\to \C$, we set $Tu:\Z\to \C$ by $Tu(x)=u(x) $ for $x\geq 1$, $Tu(0)=0$ and $Tu(x)=-u(-x)$ for $x\leq -1$.
Then, we have $T\Delta_+ = \Delta_{\mathrm{d}} T$. Therefore, it suffices to show the estimates \eqref{stz1}, \eqref{stz2} and \eqref{kato} for $\Delta_{\mathrm{d}}$ and $u_0$, $f$ restricted to odd functions.

Thus, we immediately have the Strichartz estimates by \cite{SK05N}:
\begin{align*}
\|e^{\im t \Delta_0}P_0^\perp u_0\|_{\mathrm{Stz}(I)}\lesssim \|u_0\|_{l^2}, \quad
\|\int_0^{\cdot} e^{\im (\cdot-s)\Delta_0}P_0^\perp f(s)\,ds\|_{\mathrm{Stz}(I)}\lesssim \|f\|_{\mathrm{Stz}^*(I)}.
\end{align*}
We next show the Kato smoothness estimate
\begin{align}\label{kato2}
\|e^{\im t \Delta_0}P_0^\perp u_0\|_{L^2(I,l^{2}_{-1})}&\lesssim \|u_0\|_{l^2},
\end{align}
which do not hold for $\Delta_{\mathrm{d}}$ with general $u_0$.
To show \eqref{kato2}, it suffices to show
\begin{align}\label{katosuff}
\sup_{\substack{\|u\|_{l^2_1}\leq 1\\ u:\mathrm{odd}}}\sup_{\Im \lambda\neq 0}\|(-\Delta_{\mathrm{d}}-\lambda)^{-1}u\|_{l^2_{-1}}\lesssim 1,
\end{align}
which is a sufficient condition for Kato smoothness, see \cite{Kato65MA,RS4}.
Using the fact that $u$ is odd, we have
\begin{align}\label{mirgreen}
\((-\Delta_{\mathrm{d}}-\lambda)^{-1}u\)(x)&=-\im\sum_{y\in\Z} \frac{e^{-\im \mu|x-y|}}{\sin \mu}u(y)=-\im\sum_{y>0} \frac{e^{-\im \mu|x-y|}}{\sin \mu}u(y)-\im\sum_{y<0} \frac{e^{-\im \mu|x-y|}}{\sin \mu}u(y)\\&
=-\im \sum_{y>0}\frac{e^{-\im \mu|x-y|}-e^{-\im \mu |x+y|}}{\sin \mu} u(y).\nonumber
\end{align}
Where $\cos \mu = 1+\frac{\lambda}{2}$ , $\Im \nu \leq 0$ and $\Re \mu \in[0 , 2\pi]$ . Thus, we see that the singularity is removed and it is easy to show \eqref{katosuff}.
%\begin{remark}
%Another way of looking at the Kato smoothness \eqref{kato2} is that the obstacle for the estimate is the $0$ resonance, which is a constant (and therefore even) function in this case.
%Thus, if we restrict $\Delta_{\mathrm{d}}$ to odd function, there is no resonance anymore and we can expect \eqref{kato2} as we have actually proved above.
%\end{remark}

The estimate
\begin{align}
\|\int_0^{\cdot} e^{\im (\cdot-s)\Delta_0}P_0^\perp f(s)\,ds\|_{\mathrm{Stz}(I)}&\lesssim \|f\|_{L^2(I,l^{2}_{1})},
\end{align}
follows from the dual of \eqref{kato2} and Christ-Kiselev lemma \cite{CK01JFA,SS00CPDE}.

Finally, we prove \eqref{kato} by a parallel argument of Lemma 8.7 of \cite{CM2109.08108}.
The following formula was proved in Lemma 4.5 of \cite{Mizumachi08JMKU}:
\begin{align}
&2\int_0^t e^{-\im (t-s)\Delta_0}P_0^\perp f(s)\,ds= \frac{\im }{\sqrt{2\pi}}\int_\R  e^{-\im t\lambda}(R(\lambda-\im 0)+R(\lambda+\im 0))P(\mathcal{F}^{-1}_{t} f)(\lambda)\,d\lambda \nonumber\\&\quad
 +\int_{0}^\infty e^{-\im (t-s)\Delta_0}P_0^\perp f(s)\,ds
 -\int_{-\infty}^0e^{-\im (t-s)\Delta_0}P_0^\perp f(s)\,ds
,\label{eq:Mform}
\end{align}
where, $R(\lambda)=(-\Delta_0-\lambda)^{-1}$ and $\mathcal{F}_t^{-1}$ is the inverse Fourier transform with respect to the $t$ variable.
For the 1st term of r.h.s.\ of \eqref{eq:Mform}, by Plancherel theorem, we have
\begin{align}
&\|\int_\R  e^{-\im t\lambda}(R(\lambda-\im 0)+R(\lambda+\im 0))P_0^\perp(\mathcal{F}^{-1}_{t} f)(\lambda)\,d\lambda\|_{L^2l^2_{-1}}\lesssim \max_{\pm} \|R(\lambda\pm \im 0)P_0^\perp(\mathcal{F}^{-1}_{t} f)(\lambda)\|_{L^2_{\lambda}l^2_{-1}}\nonumber\\&
\lesssim \max_{\pm} \sup_{\lambda \in \R}\|R(\lambda\pm \im 0)\|_{l^2_1\to l^2_{-1}}\|\mathcal{F}^{-1}_t f\|_{L^2_\lambda l^2_{1}}\lesssim \|f\|_{L^2 l^2_1},
\end{align}
where we have used \eqref{katosuff} in the 3rd inequality.
The 2nd and 3rd term can be estimated by using \eqref{kato2} and its dual.
Therefore, we have the conclusion.
%We will see that proof below propositions .
\end{proof}

\section{Modulation argument}\label{sec:mod}
%In the following, we use Pauli matrices
%\begin{align}
%\sigma_1=\begin{pmatrix}
%0 & 1\\ 1 & 0
%\end{pmatrix},\ 
%\sigma_3=
%\begin{pmatrix}
%1 & 0 \\ 0 & -1
%\end{pmatrix}.
%\end{align}
%As usual for analysis of solitons for nonlinear Schr\"odinger type equation, we set
%\begin{align}
%U=\begin{pmatrix}
%u\\ \overline{u}
%\end{pmatrix},
%\end{align}
%and consider the equation
%\begin{align}\label{nls:vec}
%\im \sigma_3 \partial_t U = -\Delta_{\mathrm{d}}U - |U|^6U,
%\end{align}
%where $|U|^2:=\frac{1}{2}(U,\sigma_1 U)$ with $(U,V):=\frac{1}{2}(u_1v_1+u_2v_2)$ for $U={}^t(u_1\ u_2)$ and $V={}^t(v_1\ v_2)$.
%Equation \eqref{nls:vec} is equivalent with \eqref{dnls} with initial data $U_0\in  l^2 $, where
%\begin{align}
% l^2 :=\{U\in l^2(\Z,\C^2)\ |\ \sigma_1 U =\overline{U}\}.
%\end{align}
%It is easy to check that given an initial data $U_0\in  l^2 $, the solution of \eqref{nls:vec} satisfies $U(t)\in  l^2 $ for all $t$.
%The corresponding soliton solution in this formulation is given by
%\begin{align}
%\phi[\theta,\omega] = e^{\im \sigma_3 \theta}\begin{pmatrix}
%\phi_\omega \\
%\phi_\omega
%\end{pmatrix},
%\end{align}
%where $\phi_\omega$ is given in Proposition \ref{prop:nbs＆＝a＆＝aa 
We set $\phi[\theta,\omega]:=e^{\im \theta}\phi_\omega$.
Then, by \eqref{sp}, we have
\begin{align}\label{sp1}
\im \omega \partial_\theta \phi[\theta,\omega] = -\Delta \phi[\theta,\omega]  - |\phi[\theta,\omega]|^6\phi[\theta,\omega].  
\end{align}
Further,  differentiating \eqref{sp1} w.r.t.\ $\theta$ and $\omega$, we obtain
\begin{align}
\mathcal{H}[\theta,\omega]\partial_\theta\phi[\theta,\omega]&=\im \omega \partial_\theta^2 \phi[\theta,\omega],\label{diffPhi1}\\
\mathcal{H}[\theta,\omega]\partial_\omega \phi[\theta,\omega]&=\im \partial_\theta \phi[\theta,\omega] + \im \omega \partial_\theta \partial_\omega \phi[\theta,\omega],\label{diffPhi2}
\end{align}
where
\begin{align}\label{def:linop}
\mathcal{H}[\theta,\omega]u:=-\Delta u +\mathcal{V}[\theta,\omega]u:=-\Delta_{\mathrm{d}} u - 4|\phi[\theta,\omega]|^6u -3|\phi[\theta,\omega]|^4\phi[\theta,\omega]^2\overline{u}.
\end{align}
\begin{remark}
The operator $\mathcal{H}[\theta,\omega]$ is not $\C$-linear but only $\R$-linear due to the complex conjugate in the last term of \eqref{def:linop}.
\end{remark}
It is easy to check that $\mathcal{H}[\theta,\omega]$ is symmetric w.r.t.\ the real innerproduct $\<\cdot,\cdot\>$.
That is, we have $\<\mathcal{H}[\theta,\omega]u,v\>=\<u,\mathcal{H}[\theta,\omega]v\>$.
We set
\begin{align*}
\Omega:=\<\im \cdot,\cdot\>.
\end{align*}
\begin{remark}
$\Omega$ is the symplectic form associated to discrete NLS \eqref{dnls}.
\end{remark}
%That is,
%\begin{align}
%\Omega(\mathcal{H}[\theta,\omega]u,v)=-\Omega(u,\mathcal{H}[\theta,\omega]v).
%\end{align}
We set
\begin{align*}
\mathbf{H}_{\mathrm{c}}[\theta,\omega]:=\{u\in  l^2 \ |\  \Omega(u,\partial_\theta \phi[\theta,\omega])=\Omega(u,\partial_\omega \phi[\theta,\omega])=0\}.
\end{align*}
and
\begin{align*}
\mathcal{T}_\omega(r):=\{u\in  l^2 \ |\ \inf_{\theta\in \R}\|u-\phi[\omega,\theta]\|_{l^2}<r\}.
\end{align*}

\begin{lemma}[Modulation]\label{lem:mod}
For $\omega_*>\omega_0$, where $\omega_0$ is given in Proposition \ref{prop:nbs}, there exist $\delta>0$ s.t.\ there exist $\theta \in C^\infty(\mathcal{T}_{\omega_*}(\delta),\R)$ and $\omega\in C^\infty(\mathcal{T}_{\omega_*}(\delta),\R)$ s.t.
\begin{align}\label{xi:orth}
 \xi( u):= u-\phi[\theta( u),\omega( u)]\in \mathbf{H}_{\mathrm{c}}[\theta( u),\omega( u)].
\end{align}
\end{lemma}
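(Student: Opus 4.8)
The plan is to prove this modulation lemma via the implicit function theorem. I define a map $F=(F_1,F_2):\R^2\times \mathcal{T}_{\omega_*}(\delta)\to \R^2$ that encodes the two orthogonality conditions, namely
\begin{align*}
F_1(\theta,\omega,u)&:=\Omega\big(u-\phi[\theta,\omega],\partial_\theta \phi[\theta,\omega]\big),\\
F_2(\theta,\omega,u)&:=\Omega\big(u-\phi[\theta,\omega],\partial_\omega \phi[\theta,\omega]\big).
\end{align*}
The goal is to solve $F(\theta,\omega,u)=0$ for $(\theta,\omega)$ as a smooth function of $u$. First I would check the base point: when $u=\phi[\theta_*,\omega_*]$ for some fixed $\theta_*$, we have $u-\phi[\theta_*,\omega_*]=0$, so $F(\theta_*,\omega_*,\phi[\theta_*,\omega_*])=0$ trivially. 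This gives a solution to start the implicit function theorem from.

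Next I would compute the Jacobian $\partial_{(\theta,\omega)}F$ at the base point and show it is invertible. Differentiating and using that the residual $u-\phi[\theta,\omega]$ vanishes at the base point, the only surviving terms are those where the derivative hits the $-\phi[\theta,\omega]$ inside the first slot of $\Omega$. Thus at the base point the Jacobian reduces to the $2\times 2$ matrix with entries $-\Omega(\partial_a\phi,\partial_b\phi)$ for $a,b\in\{\theta,\omega\}$. The diagonal-ish structure of this matrix is what must be controlled: one uses that $\Omega(\partial_\theta\phi,\partial_\theta\phi)=0$ and $\Omega(\partial_\omega\phi,\partial_\omega\phi)=0$ by antisymmetry of $\Omega$, while the crucial off-diagonal term is $\Omega(\partial_\theta\phi,\partial_\omega\phi)=\<\im\, \partial_\theta\phi,\partial_\omega\phi\>=\<\im\cdot\im\phi,\partial_\omega\phi\> = -\<\phi,\partial_\omega\phi\>=-\tfrac12\partial_\omega\|\phi_\omega\|_{l^2}^2$. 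Using the asymptotics $\phi_\omega\sim \omega^{1/6}\delta_0$ from Proposition \ref{prop:nbs}, one has $\|\phi_\omega\|_{l^2}^2\sim \omega^{1/3}$, so this quantity is of size $\omega^{-2/3}\neq 0$ for $\omega$ near $\omega_*>\omega_0$. Hence the Jacobian is an antisymmetric matrix with nonzero off-diagonal entry, so its determinant is nonzero and it is invertible.

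With invertibility of the Jacobian established, the implicit function theorem (which applies since $\phi_\cdot\in C^\omega$ by Proposition \ref{prop:nbs}, making $F$ smooth in all variables) yields, in a neighborhood of the base point, unique $C^\infty$ functions $\theta(u),\omega(u)$ satisfying $F(\theta(u),\omega(u),u)=0$, which is exactly the orthogonality condition $\xi(u)\in \mathbf{H}_{\mathrm{c}}[\theta(u),\omega(u)]$. Shrinking to a uniform $\delta>0$ over the compact set of phases $\theta_*\in\R$ (using periodicity in $\theta$, so effectively $\theta_*$ ranges over a compact circle) gives the neighborhood $\mathcal{T}_{\omega_*}(\delta)$; here one uses that the construction is equivariant under the phase rotation $u\mapsto e^{\im\sigma}u$, so a single local solution near $\theta_*=0$ propagates to all phases and a uniform $\delta$ exists.

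I expect the main obstacle to be verifying the uniform nonvanishing and the uniform-in-$\theta_*$ size control of the Jacobian determinant, rather than the implicit function theorem mechanics. Concretely, one must confirm that $\partial_\omega\|\phi_\omega\|_{l^2}^2$ stays bounded away from zero for all $\omega$ in a neighborhood of $\omega_*$, and that the remaining correction terms in the Jacobian (arising from the full $\phi_\omega$ rather than the leading $\omega^{1/6}\delta_0$) are genuinely lower order; this is where the estimates \eqref{sp:est1}--\eqref{sp:est2} do the real work, guaranteeing the perturbations are $O(\omega^{-5/6})$ and hence negligible against the leading $O(\omega^{-2/3})$ off-diagonal entry for $\omega$ large. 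Once this quantitative control is in hand, the equivariance argument to obtain a $\theta_*$-independent $\delta$ is routine.
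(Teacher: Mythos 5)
Your proposal is correct and follows essentially the same route as the paper: set up the map $\mathcal{F}(\theta,\omega,u)$ encoding the two symplectic orthogonality conditions, observe it vanishes at $u=\phi[\theta,\omega]$, show the $(\theta,\omega)$-Jacobian there reduces to the antisymmetric matrix with off-diagonal entry $\pm q'(\omega)=\pm\tfrac12\partial_\omega\|\phi_\omega\|_{l^2}^2\sim\omega^{-2/3}\neq 0$ via Proposition \ref{prop:nbs}, and apply the implicit function theorem. Your added remarks on phase equivariance to get a $\theta_*$-uniform $\delta$ make explicit a point the paper leaves implicit, but the substance of the argument is identical.
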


\begin{proof}
Set
\begin{align}\label{Fformod}
\mathcal{F}(\theta,\omega, u):=\begin{pmatrix}
\Omega( u-\phi[\theta,\omega],\partial_\theta\phi[\theta,\omega])\\ 
\Omega( u-\phi[\theta,\omega],\partial_\omega\phi[\theta,\omega])
\end{pmatrix}.
\end{align}
Then, we have $\mathcal{F}(\theta,\omega,\phi[\theta,\omega])=0$ and
\begin{align*}
D_{(\theta,\omega)}\mathcal{F}(\theta,\omega, u)=q'(\omega)\begin{pmatrix}
0 & -1 \\ 1 & 0
\end{pmatrix}
+
\begin{pmatrix}
\Omega( u-\phi[\theta,\omega],\partial_\theta^2 \phi[\theta,\omega]) & \Omega( u-\phi[\theta,\omega], \partial_\theta\partial_\omega \phi[\theta,\omega])\\
\Omega( u-\phi[\theta,\omega],\partial_\theta\partial_\omega \phi[\theta,\omega]) & \Omega( u-\phi[\theta,\omega], \partial_\omega^2 \phi[\theta,\omega])
\end{pmatrix},
\end{align*} 
where $q(\omega)=\frac{1}{2}\|\phi[\theta,\omega]\|_{l^2}^2$.
Since $\partial_\theta \phi[\theta,\omega]=\im \phi[\theta,\omega]$, we have
\begin{align*}
q'(\omega)=\<\partial_\omega\phi[\theta,\omega],\phi[\theta,\omega]\>=\Omega(\partial_\omega\phi[\theta,\omega],\im \phi[\theta,\omega])=\Omega(\partial_\omega\phi[\theta,\omega],\partial_\theta\phi[\theta,\omega]).
\end{align*}
Thus, from Proposition \ref{prop:nbs} we have 
\begin{align}\label{qdash}
q'(\omega)\sim \omega^{-\frac{2}{3}}>0.
\end{align}
%As same way we get 
%\begin{eqnarray*}
%\partial^2_\omega\phi[\phi,\theta]= \omega^{-\frac{11}{6}}\delta_0+R_1 , \ 
%\partial^2_\theta\phi[\phi,\theta]= \omega^{\frac{1}{6}}\delta_0+R_2, \ 
%\partial_\theta\partial_\omega\phi[\phi,\theta]= \omega^{-\frac{5}{6}}\delta_0+R_3 , 
%\end{eqnarray*}
%where $\|R_1\|_{l^2} < C \omega^{-\frac{17}{6}}$ ,$\|R_2\|_{l^2} < C \omega^{-\frac{5}{6}}$ ,$\|R_3\|_{l^2} < C \omega^{-\frac{11}{6}}$ . So 
%\begin{equation*}
%D_{(\theta,\omega)}\mathcal{F}(\theta,\omega, u) =\omega^{-\frac{2}{3}} 
%\begin{pmatrix}
%a+R_1 & -1+\omega^{-1}b+\omega^{-1}R_3 \\
%1+\omega^{-1}c+\omega^{-1}R_3 & \omega^{-2}d+\omega^{-2}R_2
%\end{pmatrix}
%\end{equation*}
%Where $a , b , c , d$ have $O(1)$ for $\omega$ . We can easily get determinant .
Therefore, $D_{(\theta,\omega)}\mathcal{F}(\theta,\omega,\phi[\theta,\omega])$ is invertible. By implicit function theorem we have the conclusion.
\end{proof}

\begin{lemma}\label{lem:xiintiest}
There exists $\omega_1>\omega_0$ s.t.\ for $\omega_*>\omega_1$, if $u\in \mathcal{T}_{\omega_*}(\delta)$, we have
\begin{align}\label{xi:est}
\|\xi(u)\|_{l^2}\lesssim \inf_{\theta\in \R}\|u-\phi[\theta,\omega_*]\|_{l^2}.
\end{align}
Here, $\delta>0$ is the constant (depending on $\omega_*$) given in Lemma \ref{lem:mod}.
\end{lemma}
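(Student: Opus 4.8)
The plan is to exploit that the map $u\mapsto\xi(u)$ built in Lemma \ref{lem:mod} is $C^\infty$ on $\mathcal{T}_{\omega_*}(\delta)$ and vanishes identically on the orbit $\mathcal{O}_{\omega_*}:=\{\phi[\theta,\omega_*]\mid\theta\in\R\}$, and then to bound $\|\xi(u)\|_{l^2}$ by the distance of $u$ to this orbit via a mean value inequality. Since $\theta\mapsto\phi[\theta,\omega_*]=e^{\im\theta}\phi_{\omega_*}$ is $2\pi$-periodic and continuous, the infimum $d:=\inf_{\theta\in\R}\|u-\phi[\theta,\omega_*]\|_{l^2}$ is attained at some $\theta_*$; set $v:=\phi[\theta_*,\omega_*]$, so $\|u-v\|_{l^2}=d<\delta$. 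Because each $w_t:=v+t(u-v)$, $t\in[0,1]$, satisfies $\inf_\theta\|w_t-\phi[\theta,\omega_*]\|_{l^2}\le\|w_t-v\|_{l^2}\le d<\delta$, the whole segment lies in $\mathcal{T}_{\omega_*}(\delta)$, where $\xi$ is smooth. As the modulation applied to $v\in\mathcal{O}_{\omega_*}$ returns $\theta(v)=\theta_*$, $\omega(v)=\omega_*$ and hence $\xi(v)=0$, the fundamental theorem of calculus gives $\xi(u)=\int_0^1 D\xi(w_t)[u-v]\,dt$, so that $\|\xi(u)\|_{l^2}\le\bigl(\sup_{t\in[0,1]}\|D\xi(w_t)\|\bigr)\,d$. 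It therefore suffices to bound $\|D\xi(w)\|$ uniformly for $w\in\mathcal{T}_{\omega_*}(\delta)$ and $\omega_*>\omega_1$.

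To control $D\xi$, I would differentiate $\xi(w)=w-\phi[\theta(w),\omega(w)]$ and use the implicit function theorem representation $D(\theta,\omega)(w)=-\bigl(D_{(\theta,\omega)}\mathcal{F}\bigr)^{-1}D_u\mathcal{F}$ coming from \eqref{Fformod}, where $D_u\mathcal{F}(w)[h]$ has components $\Omega(h,\partial_\theta\phi)$ and $\Omega(h,\partial_\omega\phi)$ (all derivatives of $\phi$ evaluated at the modulation parameters). This yields
\[
D\xi(w)[h]=h-\partial_\theta\phi\,\bigl(D\theta(w)[h]\bigr)-\partial_\omega\phi\,\bigl(D\omega(w)[h]\bigr).
\]
The subtle point is that $\partial_\theta\phi=\im\phi$ and $\partial_\omega\phi$ live at the very different scales $\|\partial_\theta\phi\|_{l^2}\sim\omega^{1/6}$ and $\|\partial_\omega\phi\|_{l^2}\sim\omega^{-5/6}$ (Proposition \ref{prop:nbs}), so the naive bound $\|(D_{(\theta,\omega)}\mathcal{F})^{-1}\|\lesssim 1/q'(\omega)\sim\omega^{2/3}$ from \eqref{qdash} would lose powers of $\omega$ upon reinsertion. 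The key observation is that $q'(\omega)\sim\omega^{-2/3}$ is exactly the product of these two scales. Conjugating by $S:=\mathrm{diag}(\omega^{1/6},\omega^{-5/6})$, the leading symplectic part $q'(\omega)\begin{pmatrix}0&-1\\1&0\end{pmatrix}$ becomes, under $S^{-1}(\,\cdot\,)S^{-1}$, an antisymmetric matrix of entries $\sim q'(\omega)/(\omega^{1/6}\omega^{-5/6})\sim 1$, hence $O(1)$-invertible, while $S^{-1}D_u\mathcal{F}$ is $O(1)$ as an operator $l^2\to\R^2$. Tracking components separately then gives $\|\partial_\theta\phi\|_{l^2}\,|D\theta(w)[h]|\lesssim\|h\|_{l^2}$ and $\|\partial_\omega\phi\|_{l^2}\,|D\omega(w)[h]|\lesssim\|h\|_{l^2}$, whence $\|D\xi(w)\|\lesssim 1$ uniformly.

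It remains to absorb the correction to the Jacobian, namely
\[
E(w):=\begin{pmatrix}\Omega(\xi(w),\partial_\theta^2\phi)&\Omega(\xi(w),\partial_\theta\partial_\omega\phi)\\\Omega(\xi(w),\partial_\theta\partial_\omega\phi)&\Omega(\xi(w),\partial_\omega^2\phi)\end{pmatrix},
\]
into the same rescaling. Using $\|\partial_\theta^2\phi\|_{l^2}\sim\omega^{1/6}$, $\|\partial_\theta\partial_\omega\phi\|_{l^2}\sim\omega^{-5/6}$, $\|\partial_\omega^2\phi\|_{l^2}\sim\omega^{-11/6}$ from Proposition \ref{prop:nbs} together with $\|\xi(w)\|_{l^2}<\delta$, every entry of $S^{-1}E(w)S^{-1}$ is $\lesssim\delta\,\omega^{-1/6}$; since this is precisely the smallness that the implicit function theorem of Lemma \ref{lem:mod} already requires of $\delta$ in order that $D_{(\theta,\omega)}\mathcal{F}$ stay invertible throughout $\mathcal{T}_{\omega_*}(\delta)$, a Neumann series gives $\|(S^{-1}D_{(\theta,\omega)}\mathcal{F}\,S^{-1})^{-1}\|\lesssim 1$ for $\omega_1$ large (the role of $\omega_1$ being to turn the estimates of Proposition \ref{prop:nbs} into two-sided bounds). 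I expect this balancing of the two modulation scales against $q'(\omega)\sim\omega^{-2/3}$ to be the one genuinely delicate step; the rest is the mean value inequality together with bookkeeping of the bounds from Proposition \ref{prop:nbs}.
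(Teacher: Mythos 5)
Your proof is correct and follows essentially the same route as the paper's: integrate along the segment from the nearest orbit point $\phi[\theta_*,\omega_*]$ (where $\xi$ vanishes), control $D\theta$, $D\omega$ through the implicit-function-theorem formula for $\mathcal{F}$, and exploit the scale balance $q'(\omega)\sim\omega^{-2/3}=\omega^{1/6}\cdot\omega^{-5/6}$ so that $\|\partial_\theta\phi\|_{l^2}|D\theta[h]|+\|\partial_\omega\phi\|_{l^2}|D\omega[h]|\lesssim\|h\|_{l^2}$ uniformly in $\omega_*$. The only difference is organizational: where you assert $\|\xi(w)\|_{l^2}<\delta$ and $\omega(w)\sim\omega_*$ throughout the tube (which needs the $\delta$ of Lemma \ref{lem:mod} to be taken small enough, and is not immediate from the definition of $\mathcal{T}_{\omega_*}(\delta)$ alone), the paper instead runs a continuity/bootstrap argument along the segment, assuming $A[s]\leq C_0\epsilon$ and $|\omega[s]-\omega_*|\leq C_0\omega_*^{5/6}\epsilon$ and improving the constants, so that the smallness used in the estimates is justified self-consistently rather than by shrinking $\delta$.
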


\begin{proof}
Fix $u\in \mathcal{T}_{\omega_*}(0,\delta)$ and set $\epsilon=\inf_{\theta\in \R}\|u-\phi[\theta,\omega_*]\|_{l^2}$.
We take $\theta_0$ to satisfy $\epsilon=\|u-\phi[\theta_0,\omega_*]\|_{l^2}$  and set $v:=u-\phi[\theta_0,\omega_*]$ and for $s\in[0,1]$,
\begin{align*}
u[s]&:=\phi[\theta_0,\omega_*]+sv,\ w[s]:=w(u[s]),\ \theta[s]:=\theta(u[s]),\\
A[s]&:=\|u[s]-\phi[\omega[s],\theta[s]]\|_{l^2}.
\end{align*}
Notice that we have $u[0]=\phi[\theta_0,\omega_*]$, $\theta[0]=\theta_0$, $\omega[0]=\omega_*$, $u[1]=u$ and \eqref{xi:est} is equivalent to $A[1]\lesssim \epsilon$.
So, to show \eqref{xi:est} we prove the following claim.
\begin{claim}\label{claim:xi}
there exists $\omega_1>\omega_0$ and $C_0>0$ s.t.\ if $\omega_*>\omega_1$ and if
\begin{align}
|\omega[s]-\omega_*|&\leq C_0\omega_*^{5/6}\epsilon,\label{bootxini1}\\
A[s]&\leq C_0\epsilon,\label{bootxiinit2}
\end{align}
we have \eqref{bootxini1} and \eqref{bootxiinit2} with $C_0$ replaced by $C_0/2$.
\end{claim}

\begin{proof}[Proof of claim \ref{claim:xi}]
We assume \eqref{bootxini1} and \eqref{bootxiinit2} for all $\tau\in [0,s]$ for some $s\in (0,1)$.
We take $C_0= \omega_1^{\frac{1}{12}}$.
In this proof, when we use $\lesssim$ or $\sim$, the implicit constant will not depend on $\omega_1$, $\omega_*$ nor $s$.

From \eqref{bootxini1}, we have 
\begin{align}\label{omegasequiv}
\omega[s]\sim \omega_*.
\end{align}
From the fundamental theorem of calculus, we have
\begin{align}\label{Aboot}
A[s]&\leq \epsilon + \|\phi[\theta_s,\omega_s]-\phi[\theta_0,\omega_*]\|_{l^2\nonumber}\\&
\leq \epsilon + \int_0^s \(\|\partial_{\theta} \phi[\theta_ \tau,\omega_ \tau\|_{l^2} |D_u \theta(u_ \tau)v|+\|\partial_{\omega} \phi[\theta_ \tau,\omega_ \tau]\|_{l^2} |D_u \omega(u_ \tau)v|\)\,d\tau.
\end{align}
Differentiating $\mathcal{F}(\theta(u_\tau),\omega(u_\tau),u_\tau)=0$ w.r.t.\ $\tau$, where $\mathcal{F}$ is the function given in \eqref{Fformod}, we have
\begin{align}\label{eq:DuthetaDuxi}
\begin{pmatrix}
D_u \theta(u_\tau)v\\
D_u \omega(u_\tau)v
\end{pmatrix}
=
\(D_{(\theta,\omega)}\mathcal{F}(\theta(u_\tau),\omega(u_\tau),u_\tau)\)^{-1}
\begin{pmatrix}
\Omega( v,\partial_\theta\phi[\theta_\tau,\omega_\tau])\\ 
\Omega( v,\partial_\omega\phi[\theta_\tau,\omega_\tau])
\end{pmatrix}.
\end{align}
The determinant of $D_{(\theta,\omega)}\mathcal{F}$ can be explicitly written as
\begin{align}
&\mathrm{det}\(D_{(\theta,\omega)}\mathcal{F}\)(\theta_\tau,\omega_\tau,u_\tau)=-(q'(\omega_\tau))^2\label{detF}\\&+\Omega( u_\tau-\phi[\theta_\tau,\omega_\tau],\partial_\theta^2 \phi[\theta_\tau,\omega_\tau])
\Omega( u_\tau-\phi[\theta_\tau,\omega]_\tau, \partial_\omega^2 \phi[\theta_\tau,\omega_\tau])
+\Omega( u_\tau-\phi[\theta_\tau,\omega_\tau],\partial_\theta\partial_\omega \phi[\theta_\tau,\omega_\tau])^2.\nonumber
\end{align}
By \eqref{qdash} and \eqref{omegasequiv}, we have $q'(\omega_\tau)^{2}\sim \omega_*^{-\frac{4}{3}}=\omega_*^{-\frac{16}{12}}$ and
\begin{align*}
|\mathrm{det}\(D_{\theta,\omega}\mathcal{F}\)(\theta_\tau,\omega_\tau,u_\tau)+(q'(\omega_\tau))^2|\lesssim A[\tau]\omega_\tau^{\frac{1}{3}-2}\lesssim  \epsilon \omega_*^{-\frac{19}{12}}.
\end{align*}
Thus, we have
\begin{align}\label{detF3}
\mathrm{det}\(D_{\theta,\omega}\mathcal{F}\)(\theta_\tau,\omega_\tau,u_\tau)\sim \omega_*^{-\frac{4}{3}}.
\end{align}
Computing the r.h.s.\ explicitly and using Proposition \ref{prop:nbs}, \eqref{omegasequiv} and \eqref{detF3}, we have
\begin{align}
|D_u\theta(u_\tau)v|+\omega_*^{-1}|D_u \omega(u_\tau)v|\lesssim \omega_*^{-\frac{1}{6}}\(1 + A(s)\omega_*^{-\frac{1}{6}}\)\epsilon.
\end{align}
Substituting this bound in \eqref{Aboot} and $\omega_s=\omega_*+\int_0^t D_u\omega(u_\tau)v\,d\tau$, we have
\begin{align}
A[s]\lesssim \(1 + A(s)\omega_*^{-\frac{1}{6}}\)\epsilon\lesssim (1+\omega_1^{-\frac{1}{12}}\epsilon)\epsilon\lesssim \epsilon,\\
|\omega[s]-\omega_*|\lesssim \omega_*^{\frac{5}{6}}\(1 + A(s)\omega_*^{-\frac{1}{6}}\)\epsilon\lesssim \omega_*^{\frac{5}{6}}(1+\omega_1^{-\frac{1}{12}}\epsilon)\epsilon\lesssim \omega_*^{\frac{5}{6}}\epsilon.
\end{align}
Therefore, we have the conclusion.
\end{proof}
By claim \ref{claim:xi} and continuity argument, we obtain \eqref{bootxini1} and \eqref{bootxiinit2} with $s=1$ and in particular \eqref{xi:est}.
\end{proof}
%and
%\begin{align}
%P_{\mathrm{d}}[\theta,\omega]&:=\(q'(\omega)\)^{-1}\(\Omega(\cdot,\partial_\theta\phi[\theta,\omega])\partial_\omega\phi[\theta,\omega]+\Omega(\partial_\omega\phi[\theta,\omega],\cdot)\partial_\theta\phi[\theta,\omega]\),\\
%P_{\mathrm{c}}[\theta,\omega]&:=1-P_{\mathrm{d}}[\theta,\omega],
%\end{align}
%One can check that $P_{\mathrm{c}}[\theta,\omega]$ is a symplectic orthogonal projection w.r.t.\ $\Omega$ on $\mathbf{H}[\theta,\omega]$.
%That is, $P_{\mathrm{c}}[\theta,\omega]^2=P_{\mathrm{c}}[\theta,\omega]$, $\mathrm{Ran}P_{\mathrm{c}}[\theta,\omega]=\mathbf{H}_{\mathrm{c}}[\theta,\omega]$ and $\Omega(P_{\mathrm{c}}[\theta,\omega]u,v)=\Omega(u,P_{\mathrm{d}}[\theta,\omega]v)$.

Recall $P_0^\perp$ given in \eqref{def:pperp}.
For large $\omega$, the two spaces $P_0^\perp l^2$ and $\mathbf{H}_{\mathrm{c}}[\theta,\omega]$ become similar.
\begin{lemma}\label{lem:Q}
$\left.P_0^\perp\right|_{\mathbf{H}_{\mathrm{c}[\theta,\omega]}}$ is invertible.
Moreover, 
\begin{align}\label{def:Q}
Q[\theta,\omega]:=\(\left.P_0^\perp\right|_{\mathbf{H}_{\mathrm{c}}[\theta,\omega]}\)^{-1}:P_0^\perp  l^2 \to \mathbf{H}_{\mathrm{c}}[\theta,\omega],
\end{align}
is given by
\begin{align}\label{def:Q}
Q[\theta,\omega]u = u + e^{\im \theta} \(-\phi_{\omega}(0)^{-1}\Omega(u,\partial_\theta \phi[\theta,\omega]) + \im \partial_{\omega}\phi_\omega(0)^{-1} \Omega(u,\partial_\omega \phi[\theta,\omega])\)\delta_0,
\end{align}
and for $u\in P_0^\perp l^2$, we have
\begin{align}\label{qest}
\|u - Q[\theta,\omega]u\|_{l^2_{1}}\lesssim \omega^{-1} \|u\|_{l^2_{-1}}.
\end{align}
\end{lemma}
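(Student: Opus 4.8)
The plan is to treat \eqref{def:Q} as an explicit candidate inverse: I would check that $Q[\theta,\omega]$ maps $P_0^\perp l^2$ into $\mathbf{H}_{\mathrm{c}}[\theta,\omega]$ and is a right inverse of $\left.P_0^\perp\right|_{\mathbf{H}_{\mathrm{c}}[\theta,\omega]}$, then prove that $\left.P_0^\perp\right|_{\mathbf{H}_{\mathrm{c}}[\theta,\omega]}$ is injective; surjectivity plus injectivity give bijectivity and identify $Q[\theta,\omega]$ as the two-sided inverse. Throughout I use that $\phi_\omega$ is real-valued, that $\partial_\theta\phi[\theta,\omega]=\im\phi[\theta,\omega]$, and that the correction term in \eqref{def:Q} is a complex multiple of $\delta_0$. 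The first observation is immediate: for $u\in P_0^\perp l^2$ we have $u(0)=0$ and the added term lives only at $x=0$, so $P_0^\perp Q[\theta,\omega]u=u$ with no computation.

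The second step is to verify $Q[\theta,\omega]u\in\mathbf{H}_{\mathrm{c}}[\theta,\omega]$. Writing $Q[\theta,\omega]u=u+e^{\im\theta}c\,\delta_0$ with $c=-\phi_\omega(0)^{-1}\Omega(u,\partial_\theta\phi[\theta,\omega])+\im\,\partial_\omega\phi_\omega(0)^{-1}\Omega(u,\partial_\omega\phi[\theta,\omega])$, I would evaluate $\Omega(e^{\im\theta}\delta_0,\partial_\theta\phi[\theta,\omega])$ and $\Omega(e^{\im\theta}\delta_0,\partial_\omega\phi[\theta,\omega])$. Because $\Omega(\cdot,\cdot)=\langle\im\cdot,\cdot\rangle$ and the first slot is supported at $0$, these collapse to $\phi_\omega(0)$ and $\partial_\omega\phi_\omega(0)$ times the real, resp.\ imaginary, part of the scalar. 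The coefficient $c$ is exactly reverse-engineered so that $\Omega(e^{\im\theta}c\,\delta_0,\partial_\theta\phi)=-\Omega(u,\partial_\theta\phi)$ and $\Omega(e^{\im\theta}c\,\delta_0,\partial_\omega\phi)=-\Omega(u,\partial_\omega\phi)$, so both defining pairings of $Q[\theta,\omega]u$ vanish, proving the right-inverse property. For injectivity, if $v\in\mathbf{H}_{\mathrm{c}}[\theta,\omega]$ satisfies $P_0^\perp v=0$ then $v=\alpha\delta_0$ for some $\alpha\in\C$, and the two constraints become $\phi_\omega(0)\,\Re(\alpha e^{-\im\theta})=0$ and $\partial_\omega\phi_\omega(0)\,\Im(\alpha e^{-\im\theta})=0$. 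Since $\phi_\omega(0)\sim\omega^{1/6}$ and $\partial_\omega\phi_\omega(0)\sim\frac16\omega^{-5/6}$ are nonzero for large $\omega$ by Proposition \ref{prop:nbs}, this forces $\alpha=0$; hence $\left.P_0^\perp\right|_{\mathbf{H}_{\mathrm{c}}[\theta,\omega]}$ is bijective with inverse $Q[\theta,\omega]$.

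Finally, for \eqref{qest} I would use that $u-Q[\theta,\omega]u=-e^{\im\theta}c\,\delta_0$ and $\|\delta_0\|_{l^2_1}=1$, so $\|u-Q[\theta,\omega]u\|_{l^2_1}=|c|$, and it remains to bound the two terms of $|c|$. The decisive point is that $u(0)=0$ allows me to replace $\phi_\omega$ by $P_0^\perp\phi_\omega$ and $\partial_\omega\phi_\omega$ by $P_0^\perp\partial_\omega\phi_\omega$ inside the two pairings without changing their values; a weighted Cauchy--Schwarz in the dual pairing $l^2_{-1}\times l^2_1$ then gives $|\Omega(u,\partial_\theta\phi)|\lesssim\|u\|_{l^2_{-1}}\|P_0^\perp\phi_\omega\|_{l^2_1}$ and $|\Omega(u,\partial_\omega\phi)|\lesssim\|u\|_{l^2_{-1}}\|P_0^\perp\partial_\omega\phi_\omega\|_{l^2_1}$. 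Inserting $\|P_0^\perp\phi_\omega\|_{l^2_1}\lesssim\omega^{-5/6}$ and $\|P_0^\perp\partial_\omega\phi_\omega\|_{l^2_1}\lesssim\omega^{-11/6}$ from Proposition \ref{prop:nbs}, together with $\phi_\omega(0)^{-1}\sim\omega^{-1/6}$ and $\partial_\omega\phi_\omega(0)^{-1}\sim\omega^{5/6}$, makes each term $\lesssim\omega^{-1}\|u\|_{l^2_{-1}}$, which is \eqref{qest}.

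I expect the only genuinely delicate part to be the real/complex bookkeeping in the symplectic pairings, namely keeping track of which pieces of $c$ pair against $\partial_\theta\phi$ versus $\partial_\omega\phi$ and getting the signs of the $\Re$/$\Im$ reductions right; the algebraic cancellations that put $Q[\theta,\omega]u$ into $\mathbf{H}_{\mathrm{c}}[\theta,\omega]$ hinge entirely on these. By contrast, once the structural identity $u-Q[\theta,\omega]u=-e^{\im\theta}c\,\delta_0$ is in hand, the quantitative estimate is a routine weighted Cauchy--Schwarz whose only substantive input is the exponential smallness of $P_0^\perp\phi_\omega$ exposed by the orthogonality $u(0)=0$.
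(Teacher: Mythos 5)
Your proposal is correct and follows essentially the same route as the paper: both arguments rest on the observation that any preimage of $u$ under $\left.P_0^\perp\right|_{\mathbf{H}_{\mathrm{c}}[\theta,\omega]}$ must have the form $u+e^{\im\theta}q(u)\delta_0$, determine $q(u)$ from the two symplectic orthogonality conditions (using $\partial_\theta\phi[\theta,\omega]=\im\phi[\theta,\omega]$ and the fact that $\delta_0$ is supported at the origin), and deduce \eqref{qest} from Proposition \ref{prop:nbs}. Your write-up merely verifies the stated formula as a candidate inverse rather than deriving it, and it usefully makes explicit the step the paper leaves implicit, namely that $u(0)=0$ lets one insert $P_0^\perp$ in front of $\phi_\omega$ and $\partial_\omega\phi_\omega$ inside the pairings, which is exactly what produces the factor $\omega^{-1}$.
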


\begin{proof}
Since $\(P_0^\perp u\)(x)=u(x)$ for $x\neq 0$, the only possible form of the inverse of $\left.P_0^\perp\right|_{\mathbf{H}_{\mathrm{c}[\theta,\omega]}}$ is
\begin{align}\label{Qans}
Q[\theta,\omega]u=u + e^{\im \theta}q(u)\delta_0,
\end{align}
with $q(u)\in \C$.
Substituting  \eqref{Qans} into 
$ \Omega(Q[\theta,\omega]u,\partial_X\phi[\theta,\omega])=0$ for $X=\theta,\omega$, we  have
\begin{align}
\Re q(u)=-\phi_{\omega}(0)^{-1}\Omega(u,\partial_\theta \phi[\theta,\omega]),\ \Im q(u)=\partial_{\omega}\phi_\omega(0)^{-1} \Omega(u,\partial_\omega \phi[\theta,\omega]).
\end{align}
Since $v=Q[\theta,\omega]P_0^\perp u$ is the unique element of $l^2$ satisfying $v(x)=u(x)$ for $x\neq 0$ and $v\in \mathbf{H}_{\mathrm{c}}[\theta,\omega]$, we see $v=u$ for $u\in \mathbf{H}_{\mathrm{c}[\theta,\omega]}$.
Finally, \eqref{qest} follows from Proposition \ref{prop:nbs} and \eqref{def:Q}.
\end{proof}

%\begin{lemma}
%$P_{\mathrm{d}}[\theta,\omega]$ is a symplectic orthogonal projection on $\mathbf{H}_c[\theta,\omega]$ (i.e.\ $P_{\mathrm{c}}[\theta,\omega]^2=P_{\mathrm{c}}[\theta,\omega]$, $\Omega(P_{\mathrm{c}}[\theta,\omega]\cdot ,\cdot)=\Omega(\cdot,P_{\mathrm{c}}[\theta,\omega]\cdot )$ and $\mathrm{Ran}P_{\mathrm{d}}[\theta,\omega]=\mathbf{H}_{\mathrm{c}}[\theta,\omega]$).
%Further, we have
%\begin{align}
%P_{\mathrm{c}}[\theta,\omega]=e^{\im \sigma_3\theta}P_{\mathrm{c}}[0,\omega]e^{-\im \sigma_3\theta}.
%\end{align}
%\end{lemma}
%
%\begin{remark}
%We also have $\mathbf{H}_{\mathrm{c}}[\theta,\omega]=e^{\im \sigma_3 \theta}\mathbf{H}_{\mathrm{c}}[0,\omega]$.
%\end{remark}

In the following, we write $\theta(t):=\theta(u(t))$, $\omega(t):=\omega(u(t))$ and  $\xi(t):=\xi(u(t))$ where $u(t)$ is the solution of \eqref{dnls}.
Substituting $ u=\phi[\theta,\omega]+ \xi$ into the equation, we have
\begin{align}\label{nls:modcoor}
\im  \dot{ \xi} + \im  \partial_\theta \phi [\theta,\omega](\dot{\theta}-\omega) + \im \partial_\omega\phi [\theta,\omega]\dot{\omega} = \mathcal{H}[\theta,\omega] \xi - f[\theta,\omega, \xi] -|\xi|^6\xi,
\end{align}
where 
\begin{align}\label{def:f}
f[\theta,\omega, \xi]&:=\sum_{0\leq a\leq 4, 0\leq b\leq 3, 2\leq a+b\leq 6} A_{a,b}\phi[\theta,\omega]^{4-a}\overline{\phi[\theta,\omega]}^{3-b}\xi^a\overline{\xi}^b,
\end{align}
for some $A_{a,b}\in \N$.

We set 
\begin{align}\label{def:eta}
\eta(t):=P_0^\perp \xi(t).
\end{align}
Notice that from Lemma \ref{lem:Q}, we have $\xi (t)= Q[\theta(t),\omega(t)]\eta(t)$. 
Applying $P_0^\perp$ to \eqref{nls:modcoor}, we have
\begin{align}\label{eq:eta}
\im \dot{\eta}=&-\Delta_0 \eta  -P_0^\perp \Delta_{\mathrm{d}}(1-Q[\theta,\omega])\eta+P_0^\perp \mathcal{V}[\theta,\omega]\xi - \im P_0^\perp \partial_\theta \phi [\theta,\omega](\dot{\theta}-\omega) - \im P_0^\perp\partial_\omega\phi [\theta,\omega]\dot{\omega}\nonumber \\&
- P_0^\perp f(\theta,\omega,\xi) -P_0^\perp \(|\xi|^6 \xi\).
\end{align}

We next seek for the equation for $\dot{\theta}-\omega$ and $\dot{\omega}$.
First, taking the innerproduct $\<\eqref{nls:modcoor}, \partial_\omega\phi[\theta,\omega]\>$, we have
\begin{align}\label{eq:dotthetaomega}
\Omega(\dot{ \xi},\partial_\omega \phi[\theta,\omega])-q'(\omega)(\dot{\theta}-\omega)=\<\mathcal{H}[\theta,\omega] \xi,\partial_\omega\phi[\theta,\omega]\>-\< f[\theta,\omega, \xi]+|\xi|^6\xi,\partial_\omega\phi[\theta,\omega]\>.
\end{align}
Now, since $\frac{d}{dt}\Omega({ \xi},\partial_\omega \phi[\theta,\omega])=0$,
\begin{align}\label{eq:dotthetaomega1}
\Omega(\dot{ \xi},\partial_\omega \phi[\theta,\omega])=-\Omega( \xi,\partial_\omega^2\phi[\theta,\omega])\dot{\omega}-\Omega( \xi,\partial_\theta\partial_\omega \phi[\theta,\omega])\dot{\theta}.
\end{align}
From \eqref{diffPhi2}, we have
\begin{align}
\< \mathcal{H}[\theta,\omega] \xi,\partial_\omega\phi[\theta,\omega]\>&=\<  \xi,  \im \partial_\theta \phi[\theta,\omega] + \im \omega \partial_\theta \partial_\omega \phi[\theta,\omega]\>\nonumber\\&= -\omega \Omega (\xi, \partial_\theta \partial_\omega \phi[\theta,\omega]).\label{eq:dotthetaomega2}
\end{align}
Combining \eqref{eq:dotthetaomega}, \eqref{eq:dotthetaomega1} and \eqref{eq:dotthetaomega2} we have
\begin{align}\label{eq:dottheta}
\(q'(\omega)+\Omega( \xi,\partial_\theta\partial_\omega \phi[\theta,\omega])\)\(\dot{\theta}-\omega\)
+\Omega( \xi,\partial_\omega^2\phi[\theta,\omega])\dot{\omega}
=
\< f[\theta,\omega, \xi]+|\xi|^6\xi,\partial_\omega\phi[\theta,\omega]\>.
\end{align}
Similarly, taking the innerproduct $\<\eqref{nls:modcoor},\partial_{\theta}\phi[\theta,\omega]\>$, we have
\begin{align}\label{eq:dotomega}
\(q'(\omega)-\Omega(\xi,\partial_{\theta}\partial_{\omega}\phi[\theta,\omega])\)\dot{\omega} -\Omega(\xi,\partial_\theta^2\phi[\theta,\omega])(\dot{\theta}-\omega)=-\<f[\theta,\omega,\xi]+|\xi|^6\xi,\partial_{\theta}\phi[\theta,\omega]\>.
\end{align}
Combining \eqref{eq:dottheta} and \eqref{eq:dotomega}, we have
\begin{align}\label{eq:disceq}
A[\theta,\omega,\eta]\begin{pmatrix}
\dot{\theta}-\omega\\ \omega^{-1}\dot{\omega}
\end{pmatrix}
=
\begin{pmatrix}
\< f[\theta,\omega, \xi]+|\xi|^6\xi,\partial_\omega\phi[\theta,\omega]\>\\
-\omega^{-1}\<f[\theta,\omega,\xi]+|\xi|^6\xi,\partial_{\theta}\phi[\theta,\omega]\>
\end{pmatrix},
\end{align}
where
\begin{align}\label{def:A}
A[\theta,\omega,\eta]:=\begin{pmatrix}
q'(\omega)+\Omega(  Q[\theta,\omega]\eta,\partial_\theta\partial_\omega \phi[\theta,\omega]) & \omega \Omega(  Q[\theta,\omega]\eta,\partial_\omega^2\phi[\theta,\omega])\\
-\omega^{-1}\Omega( Q[\theta,\omega]\eta,\partial_\theta^2\phi[\theta,\omega]) & q'(\omega)-\Omega( Q[\theta,\omega]\eta,\partial_{\theta}\partial_{\omega}\phi[\theta,\omega])
\end{pmatrix}.
\end{align}
Here, we have multiplied $\omega^{-1}$ to \eqref{eq:dotomega} to adjust the scale.

%The determinant of $A[\theta,\omega,\eta]$ is given by
%\begin{align*}
%\mathrm{det}A[\theta,\omega,\eta]=(q'(\omega))^2 - \Omega(  Q[\theta,\omega]\eta,\partial_\theta\partial_\omega \phi[\theta,\omega])^2-\Omega(  Q[\theta,\omega]\eta,\partial_\omega^2\phi[\theta,\omega])\Omega(  Q[\theta,\omega]\eta,\partial_\omega^2\phi[\theta,\omega]).
%\end{align*}
%Since $q'(\omega)\sim \omega^{-2/3}$ and
%\begin{align*}
%\Omega(  Q[\theta,\omega]\eta,\partial_\theta\partial_\omega \phi[\theta,\omega])^2+\Omega(  Q[\theta,\omega]\eta,\partial_\omega^2\phi[\theta,\omega])\Omega(  Q[\theta,\omega]\eta,\partial_\omega^2\phi[\theta,\omega])\lesssim \|\eta\|_{l^2_{-1}}\omega ^{-\frac{5}{6}},
%\end{align*}
%we see that $A[\theta,\omega,\eta]$ is invertible if $\|\eta\|_{l^2_{-1}}\lesssim 1$.
%Thus,
%\begin{align}
%\begin{pmatrix}
%\dot{\theta}-\omega\\ \dot{\omega}
%\end{pmatrix}
%=
%\frac{1}{\mathrm{det}A[\theta,\omega,\eta]}&
%\begin{pmatrix}
%q'(\omega)-\Omega( Q[\theta,\omega]\eta,\partial_{\theta}\partial_{\omega}\phi[\theta,\omega])
%& - \Omega(  Q[\theta,\omega]\eta,\partial_\omega^2\phi[\theta,\omega])\\
%\Omega( Q[\theta,\omega]\eta,\partial_\theta^2\phi[\theta,\omega])  &
%q'(\omega)+\Omega(  Q[\theta,\omega]\eta,\partial_\theta\partial_\omega \phi[\theta,\omega])
%\end{pmatrix}\\&
%\times
%\begin{pmatrix}
%\< f[\theta,\omega, \xi]+|\xi|^6\xi,\partial_\omega\phi[\theta,\omega]\>\\
%-\<f[\theta,\omega,\xi]+|\xi|^6\xi,\partial_{\theta}\phi[\theta,\omega]\>
%\end{pmatrix}
%\end{align}

\section{Proof of main theorem}\label{sec:prmain}

We set $X_T:=\mathrm{Stz}(0,T)\cap L^2((0,T),l^2_{-1})$.

\begin{proposition}\label{prop:main}
There exists $\omega_1>\omega_0$ s.t.\ for $\omega_*>\omega_1$, there exist $\epsilon_0\in (0,1)$ and $C_0>1$ with $C_0\epsilon_0<1$ s.t.\ for $T>0$, if $\epsilon:=\inf_{\theta\in\R}\|u(0)-\phi[\omega_*,\theta]\|_{l^2}<\epsilon_0$ and 
\begin{align}
\| \xi\|_{\mathrm{Stz}\cap L^2l^{2,-s}(0,T)}\leq C_0 \epsilon,\label{bootass1}\\
\|\omega^{-1}\dot{\omega}\|_{L^1\cap L^\infty(0,T)}+\|\dot{\theta}-\omega\|_{L^1\cap L^\infty(0,T)}\leq C_0\epsilon,\label{bootass2}
\end{align}
then the above holds with $C_0$ replaced by $C_0/2$.
\end{proposition}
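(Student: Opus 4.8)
The proof is a standard continuity (bootstrap) argument: I assume \eqref{bootass1} and \eqref{bootass2} on $[0,T]$ with the constant $C_0$ and reprove them with $C_0$ replaced by $C_0/2$, by controlling the radiation $\eta=P_0^\perp\xi$ through \eqref{eq:eta} and the modulation parameters through \eqref{eq:disceq}. Throughout I use $\xi=Q[\theta,\omega]\eta$ together with \eqref{qest}, so that $\|\xi-\eta\|_{l^2_1}\lesssim\omega^{-1}\|\eta\|_{l^2_{-1}}$ and the two functions are interchangeable in the $X_T$-norm up to a factor $1+O(\omega^{-1})$. I also note that \eqref{bootass2} gives $|\log\omega(t)-\log\omega_*|\leq\|\omega^{-1}\dot\omega\|_{L^1}\leq C_0\epsilon<1$, so $\omega(t)\sim\omega_*$ on $[0,T]$ and the decay estimates of Proposition \ref{prop:nbs} apply uniformly. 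Finally, since $\eta(0)=0$ while $\xi(0)=(\xi-\eta)(0)$, the orthogonality condition forces $|\xi(0)|\lesssim\omega^{-1}\|\eta\|_{l^2_{-1}}$, a smallness I exploit repeatedly.

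For $\eta$ I would use Duhamel around $e^{\im t\Delta_0}$,
\begin{equation*}
\eta(t)=e^{\im t\Delta_0}\eta(0)-\im\int_0^t e^{\im(t-s)\Delta_0}P_0^\perp\mathcal{G}(s)\,ds,
\end{equation*}
with $\mathcal{G}$ the collection of all terms on the right of \eqref{eq:eta} other than $-\Delta_0\eta$, and apply Proposition \ref{prop:stz} term by term. The free evolution gives $\|e^{\im t\Delta_0}\eta(0)\|_{X_T}\lesssim\|\eta(0)\|_{l^2}\lesssim\epsilon$ by \eqref{stz1} and Lemma \ref{lem:xiintiest}; this is the only contribution whose constant is independent of both $C_0$ and $\omega$. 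The linearized error $P_0^\perp\Delta_{\mathrm{d}}(1-Q[\theta,\omega])\eta$ is put in $L^2(I,l^2_1)$ and, using \eqref{qest} and boundedness of $\Delta_{\mathrm{d}}$ on weighted spaces, is $\lesssim\omega^{-1}\|\eta\|_{L^2 l^2_{-1}}$. The potential term $P_0^\perp\mathcal{V}[\theta,\omega]\xi$ and the modulation terms $P_0^\perp\partial_\theta\phi[\theta,\omega](\dot\theta-\omega)$, $P_0^\perp\partial_\omega\phi[\theta,\omega]\dot\omega$ are treated the same way: since $P_0^\perp$ annihilates the $\delta_0$-part, each is exponentially localized and carries a factor $\omega^{-5/6}$ (or smaller), so by \eqref{stz2} and \eqref{kato} they contribute $\lesssim\omega^{-5/6}C_0\epsilon$. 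The genuine nonlinearities $P_0^\perp f[\theta,\omega,\xi]$ and $P_0^\perp(|\xi|^6\xi)$ go into $\mathrm{Stz}^*$: every monomial in \eqref{def:f} carries at least one factor $\phi$, hence both localization and an $\omega^{-5/6}$ gain, while the septic self-interaction is $\lesssim\|\xi\|_{\mathrm{Stz}}^7\lesssim(C_0\epsilon)^7$. The net bound is $\|\eta\|_{X_T}\leq C_1\epsilon+C_2\,\omega^{-5/6}C_0\epsilon+C_3(C_0\epsilon)^7$ with $C_1,C_2,C_3$ universal.

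For the modulation I would invert $A[\theta,\omega,\eta]$ in \eqref{eq:disceq}. By \eqref{qdash} its diagonal is $q'(\omega)\sim\omega^{-2/3}$, while the off-diagonal entries $\omega\,\Omega(Q[\theta,\omega]\eta,\partial_\omega^2\phi)$ and $\omega^{-1}\Omega(Q[\theta,\omega]\eta,\partial_\theta^2\phi)$ are $O(\omega^{-11/6}\|\eta\|)$ by the localization of the second derivatives of $\phi$ together with $\eta(0)=0$ and \eqref{qest}; hence $A$ is invertible with $\|A^{-1}\|\lesssim\omega^{2/3}$ once $\omega$ is large and $\|\eta\|$ small. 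The forcing $\<f+|\xi|^6\xi,\partial_X\phi\>$ is at least quadratic in $\xi$ and is paired against $\partial_\omega\phi\sim\omega^{-5/6}\delta_0$ and $\partial_\theta\phi=\im\phi\sim\omega^{1/6}\delta_0$; using $|\xi(0)|\lesssim\omega^{-1}\|\eta\|_{l^2_{-1}}$ and the exponential decay of the tails, these inner products are $\lesssim(\text{power of }\omega)\,\|\eta(t)\|_{l^2_{-1}}^2$ pointwise in time. The $L^\infty$-in-time bound then follows from $\|\eta\|_{L^\infty l^2}\lesssim C_0\epsilon$, and --- this is where the Kato smoothing component of $X_T$ is essential --- integrating in time and using Cauchy--Schwarz gives the $L^1$-in-time bound $\lesssim\|\eta\|_{L^2 l^2_{-1}}^2\lesssim(C_0\epsilon)^2$. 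Multiplying by $A^{-1}$ the powers of $\omega$ combine to a net negative power, yielding $\|\omega^{-1}\dot\omega\|_{L^1\cap L^\infty}+\|\dot\theta-\omega\|_{L^1\cap L^\infty}\lesssim\omega^{-4/3}(C_0\epsilon)^2$.

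To close the bootstrap I would fix the constants in this order: first take $C_0\geq 4C_1$, so the free term is $\leq\frac{C_0}{4}\epsilon$; then enlarge $\omega_1$ so the $\omega^{-5/6}C_0\epsilon$ errors in the $\eta$-estimate and the $\omega^{-4/3}(C_0\epsilon)^2$ modulation terms are small, and shrink $\epsilon_0$ so the septic term $(C_0\epsilon)^7$ and the quadratic modulation term are small, arranging the combined non-free contributions to be $\leq\frac{C_0}{4}\epsilon$ for $\omega_*>\omega_1$ and $\epsilon<\epsilon_0$. Summing gives \eqref{bootass1} and \eqref{bootass2} with $C_0/2$. The main obstacle is not the linear theory, which Proposition \ref{prop:stz} supplies, but the bookkeeping of powers of $\omega$ in the modulation step: because $\det A\sim\omega^{-4/3}$ is small, inverting $A$ amplifies errors by $\omega^{2/3}$, and one must verify that the quadratic forcing carries enough negative powers of $\omega$ --- which it does precisely because the orthogonality $\xi\in\mathbf{H}_{\mathrm{c}}[\theta,\omega]$ makes $\xi(0)$ and the pairings with $\partial_X\phi$ small --- so that the modulation estimates close uniformly as $\omega_*\to\infty$ rather than deteriorate.
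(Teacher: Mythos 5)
Your proposal is correct and follows essentially the same route as the paper: Duhamel for $\eta$ with Proposition \ref{prop:stz} applied term by term (free evolution $\lesssim\epsilon$, localized linear errors gaining $\omega^{-5/6}$, septic term $(C_0\epsilon)^7$), inversion of $A[\theta,\omega,\eta]$ with $\|A^{-1}\|\lesssim\omega^{2/3}$ against a quadratic forcing controlled in $L^1_t$ by the smoothing norm, and the same ordering of constants $C_0$, then $\omega_1$, then $\epsilon_0$. The only discrepancy is the claimed net power $\omega^{-4/3}$ in the modulation bound, where the paper's Lemma \ref{lem:disc} gives $\omega^{-1/3}$; this is immaterial since either negative power closes the bootstrap.
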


In the following, we assume \eqref{bootass1} and \eqref{bootass2}.
Further, when we use $\lesssim $ or $\sim$, the implicit constant will not depend on $C_0$, $\epsilon$, $\omega_*$ nor $\omega_1$.
Since $\sup_{t\in(0,T)]}|\omega(t)-\omega_*|\leq \|\dot{\omega}\|_{L^1(0,T)}\leq C_0\epsilon$, assuming $\omega_1>2$ if necessary, we have 
\begin{align}\label{omegaequiv}
\omega(t)\sim \omega_*\text{ for all }t\in (0,T).
\end{align}
Further, since we have set $\eta=P_0^\perp \xi$ in section \ref{sec:mod}, from \eqref{def:pperp}, we have
\begin{align}\label{assetaest}
\|\eta\|_{\mathrm{Stz}\cap L^2((0,T),l^2_{-1})}\leq C_0\epsilon.%,
\end{align}
%and from Lemma \ref{lem:Q},
%\begin{align}\label{xietaequiv}
%\|\xi\|_{\mathrm{Stz}\cap L^2((0,T),l^2_{-1})}\sim \|\eta\|_{\mathrm{Stz}\cap L^2((0,T),l^2_{-1})}.
%\end{align}
We start with the estimate of $\eta$.

\begin{lemma}\label{lem:booteta}
Under the assumption of Proposition \ref{prop:main}, we have
\begin{align}\label{eq:booteta}
\|\eta\|_{X_T}\lesssim \|\eta(0)\|_{l^2}+ C_0\omega_1^{-\frac{5}{6}}\epsilon+(C_0\epsilon)^7.
\end{align}
\end{lemma}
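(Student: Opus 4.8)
The plan is to treat \eqref{eq:eta} as the free Schr\"odinger flow generated by $-\Delta_0$ perturbed by forcing, and to run a Strichartz/Kato argument using Proposition \ref{prop:stz}. Writing Duhamel's formula for the solution of \eqref{eq:eta},
\begin{align*}
\eta(t)=e^{\im t\Delta_0}\eta(0)-\im\int_0^t e^{\im(t-s)\Delta_0}P_0^\perp F(s)\,ds,
\end{align*}
where $F:=-\Delta_{\mathrm{d}}(1-Q[\theta,\omega])\eta+\mathcal{V}[\theta,\omega]\xi-\im\partial_\theta\phi[\theta,\omega](\dot\theta-\omega)-\im\partial_\omega\phi[\theta,\omega]\dot\omega-f[\theta,\omega,\xi]-|\xi|^6\xi$ collects every term of \eqref{eq:eta} other than $-\Delta_0\eta$, the free part is bounded by \eqref{stz1} as $\|e^{\im t\Delta_0}\eta(0)\|_{X_T}\lesssim\|\eta(0)\|_{l^2}$, which is the first term of \eqref{eq:booteta}. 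For the Duhamel term I bound its $\mathrm{Stz}(0,T)$-norm by \eqref{stz2} and its $L^2((0,T),l^2_{-1})$-norm by \eqref{kato}; it therefore suffices to place each piece of $F$ into $\mathrm{Stz}^*+L^2 l^2_1$ (for the $\mathrm{Stz}$ part) and, whenever possible, into $L^2 l^2_1$ (for the local-smoothing part).

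The mechanism producing the gain $\omega_1^{-5/6}$ is that $\eta=P_0^\perp\xi$ and all $P_0^\perp$-projected quantities are supported away from $x=0$, where by \eqref{sp:est1} the soliton is exponentially small, $|\phi_\omega(x)|\lesssim\omega^{-5/6}e^{-10|x|}$. I would first dispose of the linear and soliton-localized pieces, all of which land in $L^2 l^2_1$. By \eqref{qest} and the boundedness of $\Delta_{\mathrm{d}}$ on every weighted space, $\|P_0^\perp\Delta_{\mathrm{d}}(1-Q)\eta\|_{L^2 l^2_1}\lesssim\omega^{-1}\|\eta\|_{L^2 l^2_{-1}}\lesssim\omega^{-1}C_0\epsilon$; the potential term, carrying a factor $|\phi_\omega|^6\lesssim\omega^{-5}e^{-c|x|}$, is even smaller, $\lesssim\omega^{-5}C_0\epsilon$. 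For the modulation terms I use $\partial_\theta\phi=\im\phi$ together with \eqref{sp:est2} to get $\|P_0^\perp\partial_\theta\phi\|_{l^2_1}\lesssim\omega^{-5/6}$ and, exploiting the extra $\omega^{-1}$ in the $j=1$ bound of \eqref{sp:est2}, $\|P_0^\perp\partial_\omega\phi\|_{l^2_1}\lesssim\omega^{-11/6}$; pairing these against the speeds via $\|g\|_{L^2}\le\|g\|_{L^1}^{1/2}\|g\|_{L^\infty}^{1/2}$, the bootstrap \eqref{bootass2}, and $\|\dot\omega\|_{L^2}\sim\omega_*\|\omega^{-1}\dot\omega\|_{L^2}\lesssim\omega_* C_0\epsilon$ (using \eqref{omegaequiv}), both modulation contributions are $\lesssim\omega_*^{-5/6}C_0\epsilon$.

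For the nonlinear forcing I split off the soliton-coupled part $f$ from the pure septic part. Every monomial of $f[\theta,\omega,\xi]$, after replacing $\xi$ by $\eta$ off the origin, carries at least one factor of $\phi_\omega\in l^2_{10}$, which supplies both the smallness $\omega^{-5/6}$ and strong exponential decay. The only subtlety is time integrability: the monomials of lowest degree contain only two factors of $\eta$, so $L^6 l^\infty$ alone does not suffice. I would instead place one factor of $\eta$ in the local-smoothing norm $l^2_{-1}$ and absorb all weights into the $l^\infty$-norm of the super-exponentially decaying $\phi$-prefactor, leaving the remaining factors of $\eta$ in $l^\infty$; a H\"older estimate in time using $\|\eta\|_{L^2 l^2_{-1}}\le C_0\epsilon$ from \eqref{assetaest} and $\|\eta\|_{L^\infty l^\infty}\le\|\eta\|_{L^\infty l^2}\le C_0\epsilon$ then gives $\|P_0^\perp f\|_{L^2 l^2_1}\lesssim\omega^{-5/6}(C_0\epsilon)^2\lesssim\omega^{-5/6}C_0\epsilon$, free of any dependence on $T$. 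The pure term $P_0^\perp(|\xi|^6\xi)=P_0^\perp(|\eta|^6\eta)$ off the origin has no spatial decay, so I place it in $\mathrm{Stz}^*$ via $L^{6/5}l^1$: from $\||\eta|^6\eta\|_{l^1}\le\|\eta\|_{l^\infty}^5\|\eta\|_{l^2}^2$ one gets $\||\eta|^6\eta\|_{L^{6/5}l^1}\lesssim(C_0\epsilon)^2\|\eta\|_{L^6 l^\infty}^5\lesssim(C_0\epsilon)^7$, the last term of \eqref{eq:booteta}. Using $\omega\geq\omega_1$ throughout turns the $\omega^{-5/6}$ factors into $\omega_1^{-5/6}$, and all $L^2 l^2_1$ pieces feed both \eqref{stz2} and \eqref{kato}.

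The one genuinely delicate point is the local-smoothing bound $L^2 l^2_{-1}$ for the Duhamel term driven by the non-decaying septic nonlinearity, since $|\eta|^6\eta\notin L^2 l^2_1$ and \eqref{kato} does not apply. I would obtain it exactly as \eqref{stz2} is proved: the non-retarded operator $g\mapsto e^{\im t\Delta_0}\int_\R e^{-\im s\Delta_0}P_0^\perp g(s)\,ds$ maps $\mathrm{Stz}^*\to L^2 l^2_{-1}$ by composing dual Strichartz ($\mathrm{Stz}^*\to l^2$) with the homogeneous Kato estimate \eqref{kato2} ($l^2\to L^2 l^2_{-1}$), and since the output time exponent $2$ strictly exceeds the input exponents $1$ and $\tfrac{6}{5}$, the Christ--Kiselev lemma upgrades it to the retarded estimate. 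Summing all contributions yields \eqref{eq:booteta}. The main obstacle is not any single estimate but the careful bookkeeping of $\omega$-powers, arranging that every term except the pure septic one genuinely produces the factor $\omega_1^{-5/6}$, together with the realization that the low-degree $f$-monomials must be controlled through the Kato local-smoothing norm rather than Strichartz alone.
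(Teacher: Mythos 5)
Your proof is correct and follows essentially the same route as the paper: Duhamel's formula combined with Proposition \ref{prop:stz}, with the linear, potential, modulation and $f$ terms placed in $L^2((0,T),l^2_1)$ (each producing the $\omega_1^{-5/6}$ gain via Lemma \ref{lem:Q} and \eqref{sp:est2}) and the non-decaying septic term placed in a dual Strichartz norm. The only cosmetic difference is that the paper puts $|\xi|^6\xi$ in $L^1((0,T),l^2)$ rather than $L^{6/5}((0,T),l^1)$, which lets it obtain the $L^2((0,T),l^2_{-1})$ bound of the corresponding Duhamel term directly by Minkowski's integral inequality and \eqref{kato2} instead of your Christ--Kiselev argument; both are valid.
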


\begin{proof}
From \eqref{eq:eta} and Proposition \ref{prop:stz}, we have
\begin{align}
\|\eta\|_{X_T}\lesssim &
 \|\eta(0)\|_{l^2}+\|P_0^\perp \Delta_{\mathrm{d}}(1-Q[\theta,\omega])\eta\|_{L^2((0,T),l^2_1)}
+\|P_0^\perp \mathcal{V}[\theta,\omega]Q[\theta,\omega]\eta\|_{L^2((0,T),l^2_1)}\nonumber\\&
+\| P_0^\perp \partial_\theta \phi [\theta,\omega](\dot{\theta}-\omega) \|_{L^2((0,T),l^2_1)}
+ \|P_0^\perp\partial_\omega\phi [\theta,\omega]\dot{\omega}\|_{L^2((0,T),l^2_1)}
\nonumber\\&
+\|P_0^\perp f(\theta,\omega,\xi) \|_{L^2((0,T),l^2_1)} 
+ \|P_0^\perp \(|\xi|^6 \xi\)\|_{L^1((0,T),l^2)}\nonumber\\&
+\|\int_0^{\cdot}e^{\im \Delta_0 (\cdot-s)}P_0^\perp \(|\xi|^6 \xi(s)\)\,ds\|_{L^2((0,T),l^2_{-1})}.\label{est:eta1}
\end{align}
By $\|P_0^{\perp}\|_{l^2_1\to l^2_1}\leq 1$, $\|\Delta_{\mathrm{d}}\|_{l^2_1\to l^2_1}\lesssim 1$ and Lemma \ref{lem:Q}, we have
\begin{align}\label{est:eta2}
\|P_0^\perp \Delta_{\mathrm{d}}(1-Q[\theta,\omega])\eta\|_{L^2((0,T),l^2_1)}\lesssim \omega_1^{-1}\|\eta\|_{X_T}\lesssim C_0\omega_1^{-\frac{5}{6}}\epsilon.
\end{align}
By  and \eqref{sp:est2} and \eqref{def:linop}, we have $\|P_0^\perp \mathcal{V}[\theta,\omega]Q[\theta,\omega]\|_{l^2_{-1}\to l^2_1}\lesssim \omega_1^{-5}$.
Thus,
\begin{align}\label{est:eta3}
\|P_0^\perp \mathcal{V}[\theta,\omega]Q[\theta,\omega]\eta\|_{L^2((0,T),l^2_1)}\lesssim \omega_1^{-5} \|\eta\|_{X_T}\lesssim C_0\omega_1^{-\frac{5}{6}}\epsilon.
\end{align}
For the terms in the 2nd line of \eqref{est:eta1}, by \eqref{sp:est2}, we have
\begin{align}\label{est:eta4}
\| P_0^\perp \partial_\theta \phi [\theta,\omega](\dot{\theta}-\omega) \|_{L^2((0,T),l^2_1)}
+& \|P_0^\perp\partial_\omega\phi [\theta,\omega]\dot{\omega}\|_{L^2((0,T),l^2_1)}
\nonumber\\&\lesssim \omega_1^{-\frac{5}{6}}\|\dot{\theta}-\omega\|_{L^2(0,T)}+\omega_1^{-\frac{5}{6}}\|\omega^{-1}\dot{\omega}\|_{L^2(0,T)}\lesssim C_0 \omega^{-\frac{5}{6}}\epsilon.
\end{align}
For the first term of the 3rd line of \eqref{est:eta1}, by \eqref{sp:est2} and \eqref{def:f},
\begin{align}
\|P_0^\perp f(\theta,\omega,\xi)\|_{L^2((0,T),l^{2_1})}\lesssim \sum_{j=2}^6 \omega_1^{-\frac{5}{6}(7-j)} \|\xi\|_{L^\infty((0,T),l^2)}^{j-1}\|\xi\|_{L^2((0,T),l^{2}_{-1})}
\lesssim C_0\omega_1^{-\frac{5}{6}}\epsilon.\label{est:eta5}
\end{align}
For the 2nd term in the 3rd line, since $\mathrm{Stz}\hookrightarrow L^7l^{14}$, we have
\begin{align}\label{est:6}
\|P_0^\perp \(|\xi|^6\xi\) \|_{L^1((0,T),l^2)}\leq  \|\xi\|_{X_T}^7\leq (C_0\epsilon)^7.
\end{align}
Finally, for the last term of \eqref{est:eta1}, by Proposition \ref{prop:stz}, we have
\begin{align*}
\|\int_0^{\cdot}e^{\im \Delta_0 (\cdot-s)}P_0^\perp \(|\xi|^6 \xi(s)\)\,ds\|_{L^2((0,T),l^2_{-1})}&\leq \int_0^T \|e^{\im \Delta_0 (\cdot-s)}P_0^\perp \(|\xi|^6 \xi(s)\)\|_{L^2((0,T),l^2_{-1})}\,ds\nonumber\\&
\lesssim \int_0^T \|P_0^\perp \(|\xi|^6 \xi(x)\)\|_{l^2}\,ds= \|P_0^\perp \(|\xi|^6\xi\) \|_{L^1((0,T),l^2)}.
\end{align*}
Thus, from \eqref{est:6}, we have
\begin{align}
\|\int_0^{\cdot}e^{\im \Delta_0 (\cdot-s)}P_0^\perp \(|\xi|^6 \xi(s)\)\,ds\|_{L^2((0,T),l^2_{-1})}\lesssim (C_0\epsilon)^7.\label{est:7}
\end{align}
Combining \eqref{est:eta1}--\eqref{est:7}, we have \eqref{eq:booteta}.
\end{proof}

\begin{lemma}\label{lem:disc}
Under the assumption of Proposition \ref{prop:main}, we have
We have
\begin{align}\label{est:disc}
|\dot{\theta}-\omega|+|\omega^{-1}\dot{\omega}|\lesssim \omega^{-\frac{1}{3}} \|\eta\|_{l^2_{-1}}^2+\|\eta\|_{l^2_{-1}}^7.
\end{align}
\end{lemma}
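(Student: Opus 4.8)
The plan is to read $(\dot\theta-\omega,\omega^{-1}\dot\omega)$ off \eqref{eq:disceq} by inverting the matrix $A[\theta,\omega,\eta]$ of \eqref{def:A} and then estimating the nonlinear vector on the right-hand side. Thus the proof splits into two independent tasks: (i) a quantitative invertibility statement $\|A[\theta,\omega,\eta]^{-1}\|\lesssim\omega^{2/3}$, and (ii) weighted bounds on the pairings $\<f[\theta,\omega,\xi]+|\xi|^6\xi,\partial_\omega\phi[\theta,\omega]\>$ and $\<f[\theta,\omega,\xi]+|\xi|^6\xi,\partial_\theta\phi[\theta,\omega]\>$.

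For (i), by \eqref{qdash} the diagonal entries of $A[\theta,\omega,\eta]$ equal $q'(\omega)\sim\omega^{-2/3}$ up to the corrections $\Omega(Q[\theta,\omega]\eta,\partial_\theta\partial_\omega\phi[\theta,\omega])$, while the off-diagonal entries are $\omega\,\Omega(Q[\theta,\omega]\eta,\partial_\omega^2\phi[\theta,\omega])$ and $-\omega^{-1}\Omega(Q[\theta,\omega]\eta,\partial_\theta^2\phi[\theta,\omega])$. Using $\partial_\theta\phi[\theta,\omega]=\im\phi[\theta,\omega]$ (so that $\partial_\theta^2\phi=-\phi$ and $\partial_\theta\partial_\omega\phi=\im\partial_\omega\phi$), the membership $Q[\theta,\omega]\eta\in\mathbf{H}_{\mathrm{c}}[\theta,\omega]$, the estimate \eqref{qest} of Lemma \ref{lem:Q}, and the weighted decay bounds \eqref{sp:est1}--\eqref{sp:est2} of Proposition \ref{prop:nbs}, each of these correction and off-diagonal terms is $\lesssim\omega^{-11/6}\|\eta\|_{l^2_{-1}}$, hence negligible against $q'(\omega)\sim\omega^{-2/3}$ once $\omega_1$ is large and $\|\eta\|$ small. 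Consequently $\det A\sim (q'(\omega))^2\sim\omega^{-4/3}$ exactly as in Lemma \ref{lem:mod}, and Cramer's rule gives $\|A[\theta,\omega,\eta]^{-1}\|\lesssim\omega^{2/3}$.

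For (ii), the decisive input is that $\xi=Q[\theta,\omega]\eta$ has a small value at the origin: since $\eta(0)=0$, formula \eqref{def:Q} together with Proposition \ref{prop:nbs} yields $|\xi(0)|\lesssim\omega^{-1}\|\eta\|_{l^2_{-1}}$. Because each monomial in $f$ (see \eqref{def:f}) carries at least two powers of $\xi$ and at least one power of $\phi[\theta,\omega]$, and because $\phi[\theta,\omega],\partial_\theta\phi[\theta,\omega],\partial_\omega\phi[\theta,\omega]$ are concentrated at $x=0$ with exponentially small tails of size $O(\omega^{-5/6})$, the dominant contribution to $\<f,\partial_\omega\phi[\theta,\omega]\>$ comes from $x=0$ through $\phi(0)^5\,\xi(0)^2\,\partial_\omega\phi(0)\sim\omega^{5/6}(\omega^{-1}\|\eta\|_{l^2_{-1}})^2\omega^{-5/6}$. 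This gives $|\<f,\partial_\omega\phi\>|+\omega^{-1}|\<f,\partial_\theta\phi\>|\lesssim\omega^{-2}\|\eta\|_{l^2_{-1}}^2$; the off-origin contributions are smaller since every term of $f$ there carries an extra factor $\phi(x)=O(\omega^{-5/6}e^{-10|x|})$, and the intermediate powers $\|\eta\|^3,\dots,\|\eta\|^6$ interpolate and are absorbed for $\|\eta\|$ small and $\omega$ large.

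The genuine obstacle is the degree-seven term $|\xi|^6\xi$: here I cannot use $l^2\hookrightarrow l^\infty$, as that would produce $\|\xi\|_{l^2}$ rather than the required $\|\eta\|_{l^2_{-1}}$ (note $l^2_{-1}$ is weaker than $l^2$). Instead I would pair $|\xi|^6\xi$ directly against the exponentially localized $\partial_\theta\phi=\im\phi$ and $\partial_\omega\phi$ and transfer weights pointwise via $|\eta(x)|\le e^{|x|}\|\eta\|_{l^2_{-1}}$: the seventh power produces a harmless factor $e^{7|x|}$ that is beaten by the $e^{-10|x|}$ decay of $\phi,\partial_\omega\phi$, whence $|\<|\xi|^6\xi,\partial_\theta\phi\>|\lesssim\omega^{-5/6}\|\eta\|_{l^2_{-1}}^7$ and $|\<|\xi|^6\xi,\partial_\omega\phi\>|\lesssim\omega^{-11/6}\|\eta\|_{l^2_{-1}}^7$. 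Combining with (i) yields $|\dot\theta-\omega|+|\omega^{-1}\dot\omega|\lesssim\omega^{2/3}\bigl(\omega^{-2}\|\eta\|_{l^2_{-1}}^2+\omega^{-11/6}\|\eta\|_{l^2_{-1}}^7\bigr)$, which is even stronger than \eqref{est:disc}; the two essential ingredients are thus the exponential localization of the modulation vectors (for the top-order term) and the smallness of $\xi(0)$ coming from the orthogonality \eqref{xi:orth} (which prevents the quadratic part from dominating).
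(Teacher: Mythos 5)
Your proposal is correct and follows essentially the same route as the paper: invert $A[\theta,\omega,\eta]$ with the bound $\|A^{-1}\|\lesssim\omega^{2/3}$ obtained by showing the $\eta$-dependent entries are $O(\omega^{-11/6}\|\eta\|_{l^2_{-1}})$ against $q'(\omega)\sim\omega^{-2/3}$, then estimate the pairings of $f$ and $|\xi|^6\xi$ with $\partial_\theta\phi,\partial_\omega\phi$ using the exponential localization from Proposition \ref{prop:nbs} and the smallness of $(1-Q)\eta$ from \eqref{qest}. Your bookkeeping at $x=0$ even gives a slightly sharper power of $\omega$ on the quadratic term than the paper's \eqref{eq:Lemdisc1}, but both suffice for \eqref{est:disc}.
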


\begin{proof}
First, from Proposition \ref{prop:nbs} and Lemma \ref{lem:Q} we have
\begin{align*}
|\Omega(  Q[\theta,\omega]\eta,\partial_\theta\partial_\omega \phi[\theta,\omega])|&\leq |\Omega(  \eta,P_0^\perp\partial_\theta\partial_\omega \phi[\theta,\omega])|+|\Omega((1-Q[\theta,\omega])\eta,\partial_\theta\partial_\omega \phi[\theta,\omega])|\\&\lesssim \omega^{-\frac{11}{6}}\|\eta\|_{l^2_{-1}}.
\end{align*}
Similarly, we have
\begin{align*}
|\omega \Omega(  Q[\theta,\omega]\eta,\partial_\omega^2\phi[\theta,\omega])|+|\omega^{-1}\Omega( Q[\theta,\omega]\eta,\partial_\theta^2\phi[\theta,\omega])|\lesssim \omega^{-\frac{11}{6}}\|\eta\|_{l^2_{-1}}.
\end{align*}
By \eqref{qdash}, we see that if $\|\eta\|_{l^2}\lesssim 1$, $A[\theta,\omega]$, defined in \eqref{def:A}, is invertible and we have 
\begin{align}\label{invAbound}
\|A[\theta,\omega,\eta]^{-1}\|_{\C^2\to \C^2}\lesssim \omega^{\frac{2}{3}}.
\end{align}
Next, 
\begin{align}
|\<f[\theta,\omega,\xi],\partial_\omega \phi[\theta,\omega]\>|&\lesssim \sum_{j=2}^6 \<|\phi[\theta,\omega]|^{7-j}|\xi|^{j-1}\(|\eta|+|(1-Q[\theta,\omega])\eta|\),|\partial_{\omega}\phi[\theta,\omega]|\>\nonumber\\&
%\lesssim 
% \sum_{j=2}^6\sum_{k=0}^j \<|\phi[\theta,\omega]|^{7-j}|\eta|^k |(1-Q[\theta,\omega])\eta|^{j-k},|\partial_{\omega}\phi[\theta,\omega]|\>\\&
 \lesssim \sum_{j=2}^6 \(\omega^{\frac{-46+6j}{6}}+\omega^{-\frac{4+j}{6}}   \)\|\eta\|_{l^2_{-1}}^j\lesssim \omega^{-1}\|\eta\|_{l^2_{-1}}^2.\label{eq:Lemdisc1}
\end{align}
Similarly, we have
\begin{align}\label{eq:Lemdisc2}
|\omega^{-1}\<f[\theta,\omega,\xi],\partial_{\theta}\phi[\theta,\omega]\>|\lesssim \omega^{-1}\|\eta\|_{l^2_{-1}}^2.
\end{align}
Finally,
\begin{align}\label{eq:Lemdisc3}
|\<|\xi|^6\xi,\partial_{\omega}\phi[\theta,\omega]\>|+|\omega^{-1}\<|\xi|^6\xi,\partial_{\theta}\phi[\theta,\omega]\>|\lesssim \omega^{-\frac{11}{6}}\|\eta\|_{l^2_{-1}}^7.
\end{align}
Therefore, from \eqref{eq:disceq} and \eqref{invAbound}--\eqref{eq:Lemdisc3}, we obtain \eqref{est:disc}.
\end{proof}

\begin{proof}[Proof of Proposition \ref{prop:main}]
By Lemma \ref{lem:Q}, \eqref{xi:est} and \eqref{eq:booteta}, we have
\begin{align*}
\|\xi\|_{X_T}\leq C(1+C_0\omega_1^{-\frac{5}{6}}+C_0(C_0\epsilon)^6)\epsilon,
\end{align*}
for some $C>0$
Thus, taking $C_0= 4C$ and $\omega_1$ sufficiently large and $\epsilon_0$ sufficiently small so that $C(C_0\omega_1^{-\frac{5}{6}}+C_0(C_0\epsilon_0)^6)\leq \frac{1}{4}C_0$, we have \eqref{bootass1}  with $C_0$ replaced by $C_0/2$.

Next, from \eqref{omegaequiv} and \eqref{est:disc}, we have
\begin{align*}
\|\dot{\theta}-\omega\|_{L^1\cap L^\infty(0,T)}+\|\omega^{-1}\dot{\omega}\|_{L^1\cap L^\infty}\leq \widetilde{C}(\omega_1^{-\frac{1}{3}}+C_0^6\epsilon^6)C_0\epsilon,
\end{align*}
for some $\widetilde{C}>0$.
Thus,  taking $\omega_1$ sufficiently large and $\epsilon_0$ sufficiently small so that 
$\widetilde{C}(\omega_1^{-\frac{1}{3}}+C_0^6\epsilon^6)\leq \frac{1}{2}$, we have \eqref{bootass2} with $C_0$ replaced by $C_0/2$.
\end{proof}

\begin{proof}[Proof of Theorem \ref{thm:main}]
By Proposition \ref{prop:main}, we have \eqref{bootass1} and \eqref{bootass2} with $T=\infty$.
In particular, this estimate implies the convergence of $\omega$ in \eqref{eq:main2} and the  bound on the first term in the inequality of  \eqref{eq:main2}.
Further, since $\|\xi\|_{\mathrm{Stz}}<\infty$, by standard argument we see that there exists $\xi_+$ s.t.\ $\|\xi(t)-e^{\im t\Delta}\xi_+\|_{l^2}\to 0$ as $t\to \infty$.
Therefore, we have \eqref{thm:main} and the bound on the 2nd term in the inequality  of \eqref{eq:main2}.
 \end{proof}
\appendix

\section{Proof of Proposition \ref{prop:nbs}}

Proposition \ref{prop:nbs} is a consequence of the following lemma.
\begin{lemma}\label{lem:anbs}
There exists $\epsilon>0$ s.t. there exists $\boldsymbol{\psi}(\cdot)=\{\psi_j(\cdot)\}_{j=0}^\infty \in C^\omega((-\epsilon,\epsilon),l^\infty(\N_0,\R))$ s.t.
$\phi_\omega=\omega^{1/6}\((1+\omega^{-1}\psi_0(\omega^{-1}))\delta_0 + \sum_{j\geq 1}\omega^{-j}\psi_j(\omega^{-1})\(\delta_j+\delta_{-j}\)\)$ solves \eqref{sp}.
Here, $\N_0=\N\cup\{0\}$.
\end{lemma}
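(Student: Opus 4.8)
The plan is to pass to the anti\nobreakdash-continuous scaling, solve the resulting equation by the (analytic) implicit function theorem using that the site\nobreakdash-wise linearization at the limit is invertible, and then read off the stated decay structure from a support\nobreakdash-propagation property of the Taylor coefficients; the uniform\nobreakdash-in\nobreakdash-$j$ bound will be the real work.

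First I would substitute $\phi_\omega=\omega^{1/6}\psi$ and $\epsilon=\omega^{-1}$ into \eqref{sp}. Since $\phi_\omega$ is real, $|\phi_\omega|^6\phi_\omega=\phi_\omega^7$, and after dividing by $\omega^{7/6}$ the equation becomes
\begin{equation}\label{pl:resc}
F(\epsilon,\psi):=-\epsilon\Delta_{\mathrm{d}}\psi+\psi-\psi^7=0 ,
\end{equation}
with the power taken pointwise. At $\epsilon=0$ this is the site\nobreakdash-wise relation $\psi-\psi^7=0$, whose concentrated solution is $\psi=\delta_0$, so $F(0,\delta_0)=0$. The linearization $D_\psi F(0,\delta_0)h=h-7\delta_0^6 h$ is the \emph{diagonal} operator equal to $-6$ at $x=0$ and to $1$ at each $x\neq0$; it is thus boundedly invertible on the weighted space $l^\infty_\sigma$ (norm $\sup_x e^{\sigma|x|}|u(x)|$) for any small fixed $\sigma>0$, on which $F(\epsilon,\cdot)$ is analytic. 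The analytic implicit function theorem then produces $\epsilon_0>0$ and a real\nobreakdash-analytic family $\epsilon\mapsto\psi(\epsilon)\in l^\infty_\sigma$ solving \eqref{pl:resc}; since $\Delta_{\mathrm{d}}$ commutes with reflection and $\delta_0$ is even, uniqueness forces $\psi(\epsilon)$ to be even, and $\phi_\omega:=\omega^{1/6}\psi(\omega^{-1})$ solves \eqref{sp}.

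Next I would extract the precise ansatz from the Taylor expansion $\psi(\epsilon)=\sum_{n\ge0}\psi^{(n)}\epsilon^n$. Inserting this into \eqref{pl:resc} and using that $D_\psi F(0,\delta_0)$ is diagonal and invertible determines the $\psi^{(n)}$ recursively from $D_\psi F(0,\delta_0)\psi^{(n)}=\Delta_{\mathrm{d}}\psi^{(n-1)}+(\text{lower order})$. Because $\psi^{(0)}=\delta_0$ and $\Delta_{\mathrm{d}}$ enlarges the support of a sequence by exactly one index, an induction gives $\mathrm{supp}\,\psi^{(n)}\subseteq\{|x|\le n\}$. Hence $a_j(\epsilon):=\psi(\epsilon)(j)$ vanishes to order $j$ in $\epsilon$, so $a_j=\epsilon^j\psi_j(\epsilon)$ with each $\psi_j$ analytic, and the first-order term $\psi^{(1)}=\tfrac13\delta_0+(\delta_1+\delta_{-1})$ gives $\psi_0(0)=\tfrac13$ and $\psi_j(0)=1$ for $j\ge1$, matching the statement (with $a_0=1+\epsilon\psi_0$).

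The main obstacle is the uniform step: showing $\boldsymbol\psi=\{\psi_j\}_{j\ge0}\in l^\infty(\N_0)$ and that it depends analytically on $\epsilon$ as an $l^\infty$\nobreakdash-valued map. The fixed\nobreakdash-$\sigma$ theorem only yields $|a_j|\lesssim e^{-\sigma j}$, i.e. $|\psi_j|\lesssim(e^{-\sigma}/\epsilon)^j$, which is useless as $\epsilon\to0$; the sharp rate is $|a_j|\sim\epsilon^j$, and obtaining it amounts to inverting, uniformly in $j$, the reduced coefficient system. Using evenness, \eqref{pl:resc} is equivalent to $2\epsilon(a_0-a_1)+a_0-a_0^7=0$ at $x=0$ and $a_j(1+2\epsilon)=a_j^7+\epsilon(a_{j-1}+a_{j+1})$ for $j\ge1$, which in the variables $\psi_j$ reads
\begin{equation}\label{pl:rec}
\psi_j=\frac{1}{1+2\epsilon}\left(\psi_{j-1}+\epsilon^2\psi_{j+1}+\epsilon^{6j}\psi_j^7\right),\qquad j\ge2 ,
\end{equation}
together with the $j=0,1$ relations (the $x=0$ relation being solved for $\psi_0$ by the scalar implicit function theorem, its derivative being $-6$). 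At $\epsilon=0$ the linearization of \eqref{pl:rec} is the forward difference $h_j-h_{j-1}$, which is \emph{not} boundedly invertible on $l^\infty$: this degeneracy of the anti\nobreakdash-continuous limit is exactly why a naive implicit function theorem in the coefficient variables fails. I would resolve it by reading \eqref{pl:rec} as a fixed\nobreakdash-point map whose Lipschitz constant on $l^\infty(\N_0)$ is $\frac{1+\epsilon^2}{1+2\epsilon}+O(\epsilon^6)=1-2\epsilon+O(\epsilon^2)<1$ for small $\epsilon>0$ (the physically relevant sign $\epsilon=\omega^{-1}$). A contraction argument around $\boldsymbol\psi(0)=(\tfrac13,1,1,\dots)$, or equivalently a comparison/maximum\nobreakdash-principle estimate for the tail, then gives $\sup_j|\psi_j|\le C$ and in fact the sharp decay $|\psi_j|\lesssim(1-2\epsilon)^j$. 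Combining this uniform bound with the analyticity already secured in $l^\infty_\sigma$ (a Cauchy estimate on the uniformly bounded coefficients $\psi^{(m+j)}(j)$) yields the uniform\nobreakdash-in\nobreakdash-$j$ analyticity of $\boldsymbol\psi$ and completes the proof.
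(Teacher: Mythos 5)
Your route is in substance the paper's: the proof in the appendix also inserts the ansatz $\phi_\omega=\omega^{1/6}\bigl((1+a\psi_0)\delta_0+\sum_{j\ge1}a^j\psi_j(\delta_j+\delta_{-j})\bigr)$ with $a=\omega^{-1}$ and solves the resulting coefficient system $\mathbf{F}(\boldsymbol{\psi},a)=0$ by the analytic implicit function theorem at $a=0$, $\boldsymbol{\psi}=\frac13\mathbf{e}_0+\sum_{j\ge1}\mathbf{e}_j$. Your preliminary step (implicit function theorem in a fixed exponentially weighted space plus support propagation of the Taylor coefficients) is a clean justification that the solution really has the form $a_j=a^j\psi_j(a)$ with each $\psi_j$ analytic, which the paper takes for granted. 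More importantly, your observation that the linearization of the coefficient system at $a=0$ acts as $h_j\mapsto h_j-h_{j-1}$ and is \emph{not} boundedly invertible on $l^\infty(\N_0)$ is correct, and it applies verbatim to the operator $D_{\boldsymbol{\psi}}\mathbf{F}$ displayed in the paper's proof: its formal inverse is the partial-sum map $g\mapsto\{\sum_{1\le k\le j}g_k\}_{j}$, which is unbounded on $l^\infty$. So you have put your finger on the one genuinely delicate point, which the paper's parenthetical "one can easily find the inverse" glosses over.

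Your repair, however, does not close, and cannot close in the stated form, because the lemma is not literally true with values in $l^\infty(\N_0)$: expanding the $j\ge2$ relation $(1+2a)\psi_j=\psi_{j-1}+a^2\psi_{j+1}+a^{6j}\psi_j^7$ in powers of $a$ gives $\psi_j^{(0)}=1$ and $\psi_j^{(1)}=\psi_{j-1}^{(1)}-2$, i.e.\ $\partial_a\psi_j(0)=\partial_a\psi_1(0)-2(j-1)$, unbounded in $j$; hence $a\mapsto\boldsymbol{\psi}(a)$ is not even differentiable at $a=0$ as an $l^\infty$-valued map (this is the Taylor shadow of the true rate $a_j\sim\bigl(a(1-2a+O(a^2))\bigr)^j$, whose ratio to $a^j$ stays bounded in $j$ only for $a>0$). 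Your contraction does yield $\sup_j|\psi_j(a)|\le C$ for each fixed small $a>0$, but a one-sided uniform bound on $(0,\epsilon_1)$ gives no control of derivatives at $a=0$, and the Cauchy-estimate bookkeeping fails quantitatively: the radius of convergence of the $l^\infty_\sigma$ branch is itself $O(e^{-\sigma})$ (the implicit function theorem there requires $|a|\,\|\Delta_{\mathrm{d}}\|_{l^\infty_\sigma\to l^\infty_\sigma}\lesssim1$, and that norm is of size $e^{\sigma}$), which exactly cancels the gain $e^{-\sigma j}$ coming from the weight. Statement and proof are both rescued by replacing $l^\infty$ with the space normed by $\sup_j e^{-\sigma j}|\psi_j|$ for a small fixed $\sigma>0$: there the shift $\sum_{j\ge1}(\mathbf{e}_j,\cdot)\mathbf{e}_{j+1}$ has norm $e^{-\sigma}<1$, so $D_{\boldsymbol{\psi}}\mathbf{F}$ is invertible by a Neumann series, the two-sided analytic branch exists, and the resulting bound $|\phi_\omega(x)|\lesssim\omega^{1/6}(e^{\sigma}\omega^{-1})^{|x|}$ is all that \eqref{sp:est1}--\eqref{sp:est2} require; your contraction/maximum-principle step then recovers the sharp $\sup_j|\psi_j|\le C$ a posteriori for $a=\omega^{-1}>0$.
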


\begin{remark}
By Lemma \ref{lem:anbs}, we see $\omega^{-1/6}\phi_{\omega}$ is an analytic function w.r.t.\ $\omega^{-1}$.
Further, we have the sharp exponential decay estimate
\begin{align*}
|\phi_\omega(x)|\sim \omega^{\frac{1}{6}-|x|}.
\end{align*}
\end{remark}

\begin{proof}
Substituting $\phi_{\omega}=\omega^{\frac{1}{6}}\(\delta_0+\varphi\)$ into \eqref{sp}, we have
\begin{align}\label{sp:a1}
\omega^{-1}\(-\Delta_{\mathrm{d}}\delta_0 -\Delta_{\mathrm{d}} \varphi\)+\varphi -\sum_{n=1}^6 {}_7C_n\delta_0 \varphi ^n-\varphi ^7=0.
\end{align}
We set $\varphi=\omega^{-1}\psi_0\delta_0+\sum_{j=1}\omega^{-j}\psi_j\(\delta_j+\delta_{-j}\)$, where $\psi_j\in \R$.
Then, setting $a=\omega^{-1}$, \eqref{sp:a1} is equivalent to
\begin{align*}
\mathbf{F}( \boldsymbol{\psi}, a )=0,
\end{align*}
where $\mathbf{F}=\{F_j\}_{j=0}^\infty$, with
\begin{align*}
F_j(\boldsymbol{\psi},a)=\begin{cases}
2-2a \psi_1 +2a\psi_0 +\psi_0 -\sum_{n=1}^7{}_7C_n a^{n-1} \psi_0 ^n& j=0,\\
-1-a\psi_0+2a\psi_1 -a^2\psi_2+\psi_1 -a^6\psi_1^7 & j=1,\\
-\psi_{j-1}+2a\psi_j - a^2 \psi_{j+1} + \psi_j - a^{6j}\psi_j & j\geq 2.
\end{cases}
\end{align*}
We have $\mathbf{F}\in C^{\omega} (l^\infty \times \R,l^\infty)$.
Further, setting $\mathbf{e}_j=\{\delta_{jk}\}_{k=0}^\infty\in l^\infty(\N_0)$ where $\delta_{jk}$ is the Kronecker's delta,  we have
\begin{align*}
\mathbf{F}\(\frac{1}{3}\mathbf{e}_0+\sum_{j\geq 1}\mathbf{e}_j,0\)=0,
\end{align*}
and
\begin{align*}
D_{\boldsymbol{\psi}}\mathbf{F}\(\frac{1}{3}\mathbf{e}_0+\sum_{j\geq 1}\mathbf{e}_j,0\)=-6(\mathbf{e}_0,\cdot)\mathbf{e}_0 + \sum_{j\geq 1}(\mathbf{e}_j,\cdot)\mathbf{e}_j - \sum_{j\geq 1}(\mathbf{e}_{j},\cdot)\mathbf{e}_{j+1},
\end{align*}
which is invertible (one can easily find the inverse writing down the $D_{\boldsymbol{\psi}}F$ in the matrix form).
Thus, by implicit function theorem we have the conclusion.
\end{proof}

\section*{Acknowledgments}
M. M.\ was supported by the JSPS KAKENHI Grant Number 19K03579, G19KK0066A and\\ JP17H02853.

Masaya Maeda

Department of Mathematics and Informatics,
Faculty of Science,
Chiba University,
Chiba 263-8522, Japan

{\it E-mail Address}: {\tt maeda@math.s.chiba-u.ac.jp}

\medskip

Masafumi Yoneda

Graduate School of Science and Engineering,
Chiba University,
Chiba 263-8522, Japan

{\it E-mail Address}: {\tt cama6585@chiba-u.jp}

\end{document}